\theoremstyle{plain}
\newtheorem{theorem}{Theorem}
\newtheorem{proposition}{Proposition}
\newtheorem{lemma}{Lemma}
\newtheorem{corollary}{Corollary}
\theoremstyle{remark}
\newtheorem{remark}{Remark}
\def\softd{{\leavevmode\setbox1=\hbox{d}%
    \hbox to 1.05\wd1{d\kern-0.4ex{\char039}\hss}}}
\DeclareSymbolFont{rsfscript}{OMS}{rsfs}{m}{n}
\DeclareSymbolFontAlphabet{\mathrsfs}{rsfscript}
\newcommand{\mA}{\mathcal{A}}
\newcommand{\mB}{\mathcal{B}}
\newcommand{\mC}{\mathcal{C}}
\newcommand{\mE}{\mathcal{E}}
\newcommand{\mN}{\mathcal{N}}
\newcommand{\mR}{\mathcal{R}}
\newcommand{\mS}{\mathcal{S}}
\newcommand{\mT}{\mathcal{T}}
\def\fb{finitely based}
\def\ib{identity basis}
\def\nfb{non\-finitely based}
\newcommand{\ais}{ai-semi\-ring}
\DeclareMathOperator{\alf}{alph}
\DeclareMathOperator{\End}{End}
\DeclareMathOperator{\Id}{Eq}
\DeclareMathOperator{\var}{var}
\renewcommand*\subjclass[2][2010]{\def\@subjclass{#2}\@ifundefined{subjclassname@#1}{\ClassWarning{\@classname}{Unknown edition (#1) of Mathematics Subject Classification; using '2010'.}}{\@xp\let\@xp\subjclassname\csname subjclassname@#1\endcsname}}
\renewcommand{\subjclassname}{\textup{2010} Mathematics Subject Classification}
\begin{document}

\title[The finite basis problem for endomorphism semirings]{The finite basis problem for endomorphism semirings\\ of finite chains}
\thanks{Supported by the Russian Science Foundation (grant No.\ 22-21-00650) and the Ministry of Science and Higher Education of the Russian Federation (project FEUZ-2023-0022).}

\author[S. V. Gusev]{Sergey V. Gusev}
\address{{\normalfont Institute of Natural Sciences and Mathematics, Ural Federal University 620000 Ekaterinburg, Russia}}
\email{sergey.gusb@gmail.com}
\email{m.v.volkov@urfu.ru}

\author[M. V. Volkov]{Mikhail V. Volkov}

\begin{abstract}
For every semilattice $\mS=(S,+)$, the set $\End(\mS)$ of its endomorphisms forms a semiring under point-wise addition and composition. We prove that the semiring of all endomorphisms of the 3-element chain has no finite identity basis. This, combined with earlier results by Dolinka (The finite basis problem for endomorphism semirings of finite semilattices with zero, Algebra Universalis 61, 441--448 (2009)), gives a complete solution to the finite basis problem for semirings of the form $\End(\mS)$ where $\mS$ is a finite chain.
\end{abstract}

\keywords{Additively idempotent semiring, Finite basis problem, Endomorphism semiring of a semilattice, Inverse semigroup}

\subjclass{16Y60, 08B05, 20M18}

\maketitle

\section{Background, motivation, and overview}
\label{sec:introduction}

A semigroup $\mS=(S,+)$ is called a \emph{semilattice} if it is commutative and idempotent, that is, the equalities $s+t=t+s$ and $s+s=s$ hold for all $s,t\in S$. The set $\End(\mS)$ of all endomorphisms of $\mS$ is closed under  composition: for all endomorphisms $\alpha,\beta\colon S\to S$, their composition $\alpha\beta$, whose action at each $s\in S$ is defined by $s(\alpha\beta):=(s\alpha)\beta$, is easily seen to be an endomorphism as well. The point-wise sum $\alpha+\beta$ defined by $s(\alpha+\beta):=s\alpha+s\beta$ also is an endomorphism of $\mS$. Indeed, for all $s,t\in S$ and $\alpha,\beta\in\End(\mS)$,
\begin{align*}
  (s+t)(\alpha+\beta)&=(s+t)\alpha+(s+t)\beta&&\text{by the definition of $\alpha+\beta$}\\
                     &=(s\alpha+t\alpha)+(s\beta+t\beta)&&\text{since $\alpha$ and $\beta$ are endomorphisms}\\
                     &=(s\alpha+s\beta)+(t\alpha+t\beta)&&\text{since $(S,+)$ is a commutative semigroup}\\
                     &=s(\alpha+\beta)+t(\alpha+\beta)&&\text{by the definition of $\alpha+\beta$}.
\end{align*}
Clearly, $\End(\mS)$ is a semilattice with respect to this addition and a semigroup under composition. It is straightforward that composition distributes over point-wise addition on the left and on the right, that is,
\[
\alpha(\beta+\gamma)=\alpha\beta+\alpha\gamma\ \text{ and }\ (\alpha+\beta)\gamma=\alpha\gamma+\beta\gamma\ \text{ for all }\ \alpha,\beta,\gamma\in \End(\mS).
\]

An \emph{additively idempotent semiring} (\ais, for short) is an algebraic structure $\mathcal{R}=(R,+,\cdot)$ with two binary operations, addition $+$ and multiplication $\cdot$, such that the additive reduct $(R,+)$ is a semilattice, the multiplicative reduct $(R,\cdot)$ is a semigroup, and multiplication distributes over addition on the left and on the right. Thus, $\End(\mS)$ is an \ais{} for every semilattice $\mS$. This observation was made long ago; see, e.g., \cite[Section XIV.4, Example 4]{Birkhoff}. More recently, it has been shown by Martin Ku\v{r}il and Libor Pol\'ak that endomorphism semirings of semilattices play the same universal role for \ais{}s as symmetric groups play for groups by Cayley's textbook theorem: every \ais{} embeds into the endomorphism semiring of a suitable semilattice \cite[Theorem 2.4]{KP05}\footnote{In \cite{KP05}, as well as in other papers whose motivation comes from semigroup theory (e.g., \cite{McA07}), \ais{}s appear under the name \emph{semilattice-ordered semigroups}.}. We refer to \cite{JKM09} for a study of structure properties of endomorphism semirings of semilattices, which has revealed further similarities between these semirings and symmetric groups.

Igor Dolinka~\cite{Dolinka09endo} initiated investigations of the finite axiomatizability question (aka the finite basis problem, FBP for short) for semiring identities satisfied by endomorphism semirings of finite semilattices. Recall that a set $\Sigma$ of identities valid in an \ais{} $\mathcal R$ is called an \emph{identity basis} for $\mathcal R$ if all identities holding in $\mathcal R$ can be inferred from $\Sigma$. If $\mathcal R$ admits a finite identity basis, then it is \emph{finitely based}; otherwise, $\mathcal R$ is \emph{nonfinitely based}. The FBP for a class of \ais{}s is the issue of determining which \ais{}s from this class are finitely based and which are not.

In~\cite{Dolinka09endo}, Dolinka used the concept of inherent nonfinite basability that he adapted to the \ais{} setting in~\cite{Dolinka09infb}. Semirings considered in~\cite{Dolinka09infb} had, along with addition and multiplication, the nullary operation of taking a neutral element for addition. In~\cite{Dolinka09endo}, the approach from~\cite{Dolinka09infb} was applied to the subsemiring $\End^0(\mS)$ of $\End(\mS)$ that exists whenever the semilattice $\mS$ has the least element 0: the subsemiring consists of all endomorphisms of $\mS$ that fix 0. For finite \ais{}s of the form $\End^0(\mS)$ where $\mS$ is a semilattice with~0, Theorem~7 of \cite{Dolinka09endo} solves the FBP except for the single case when $\mS$ is the 3-element chain. The question of whether or not the exceptional \ais\ is finitely based is still open.

Let $\mC_m$ stand for the $m$-element chain considered as a semilattice. In Section~\ref{sec:methods} we explain how Dolinka's results imply that each \ais{} $\End(\mC_m)$ with $m\ge 4$ is nonfinitely based. Since the $3$-element \ais{} $\End(\mC_2)$ is easily seen to be finitely based, the FBP for endomorphism semirings of finite chains reduces to the question of whether or not $\End(\mC_3)$ is finitely based. This question turns out to be rather complicated as the \ais{} $\End(\mC_3)$  does not fall into the range of previously known tools for proving the absence of a finite identity basis for a given \ais{}. Therefore, a new method had to be developed. We prepare ingredients for our approach in Sections~\ref{sec:a21} and~\ref{sec:kadourek}; in Section~\ref{subsec:plan}, we also discuss the overall proof plan. In Section~\ref{sec:semigroups-sn}, we present our main construction and establish its crucial properties; this is the technical core of the paper. Section~\ref{sec:main} presents a sufficient condition for an \ais\ to be nonfinitely based. We show that this condition applies to the semiring $\End(\mC_3)$, thus completing the solution to the FBP for the \ais{}s $\End(\mC_m)$.

The paper uses a few standard concepts of semigroup theory (such as, e.g., semigroup presentations, Green's relations, ideals, and Rees quotients) that all can be found in the early chapters of the canonical textbooks \cite{Clifford&Preston:1961} and \cite{Howie:1995}.

\section{Applying Dolinka's results to the FBP for $\End(\mC_m)$ with $m\ge 4$}
\label{sec:methods}

\subsection{Preliminaries}\label{subsec:preliminaries}
The concepts of an identity and an identity basis are intuitively clear. Nevertheless, any precise reasoning about these concepts requires a formal framework. Such a framework, provided by equational logic, is concisely presented, e.g., in \cite[Chapter~II]{BuSa81}. We briefly overview the basic vocabulary of equational logic in a form adapted to the use in this paper. Readers familiar with equational logic may skip this overview.

By an \emph{algebraic structure} we mean a tuple $\mA=(A,f_1,\dots,f_k)$ in which $A$ is a nonempty set, called the \emph{carrier} of $\mA$, and each $f_i$, $i=1,\dots,k$, is an \emph{operation} on $A$, that is, a map
\[
f_i\colon\underbrace{A\times A\times\dots\times A}_{\text{$n_i$ times}}\to A.
\]
The integer $n_i\ge 0$ is called the \emph{arity} of the operation $f_i$, and the $k$-tuple $(n_1,\dots,n_k)$ is referred to as the \emph{type} of the structure $\mA$. We allow operations of arity 0 (\emph{nullary} operations);  a nullary operation simply selects a certain element of the set $A$.

Given a $k$-tuple $\tau=(n_1,\dots,n_k)$ of nonnegative integers, $k$ symbols $\mathbf{f}_1,\dots,\mathbf{f}_k$, and a countably infinite set $X:=\{x_1,x_2,\dots,x_n,\dots\}$ whose elements are called \emph{variables}, we define the \emph{terms of type} $\tau$ and the \emph{alphabet} $\alf(u)$ of a term $u$ by the following induction:
\begin{itemize}
\item each variable $x\in X$ is a term and $\alf(x):=\{x\}$;
\item if $n_i=0$, then $\mathbf{f}_i$ is a term and $\alf(\mathbf{f}_i):=\varnothing$;
\item if $n_i>0$ and $u_1,\dots,u_{n_i}$ are terms, then the expression $\mathbf{f}_i(u_1,\dots,u_{n_i})$ is a term and $\alf(\mathbf{f}_i(u_1,\dots,u_{n_i})):=\cup_{j=1}^{n_i}\alf(u_j)$.
\end{itemize}
Let $T(X)$ stand for the set of all terms of type $\tau$ over $X$.  For each $i=1,\dots,k$, the map
\[
f_i\colon\underbrace{T(X)\times T(X)\times\dots\times T(X)}_{\text{$n_i$ times}}\to T(X)
\]
defined by ${f}_i(u_1,\dots,u_{n_i}):=\mathbf{f}_i(u_1,\dots,u_{n_i})$ is an $n_i$-ary operation on the set $T(X)$. Thus, we get the algebraic structure $\mathcal{T}(X)=(T(X),f_1,\dots,f_k)$ of type $\tau$.

An \emph{identity} of type $\tau$ is an expression $u=v$ with $u,v\in T(X)$. A structure $\mA$ of type $\tau$ \emph{satisfies} the identity $u=v$ if the images of the terms $u$ and $v$ coincide under all homomorphisms $\mathcal{T}(X)\to\mA$. Let $\Id\mA$ denote the set of all identities satisfied by $\mA$.

Given a set $\Sigma$ of identities of type $\tau$, we say that an identity $u=v$ \emph{follows} from $\Sigma$ or that $\Sigma$ \emph{implies} $u=v$ if every algebraic structure of type $\tau$ satisfying all identities of $\Sigma$ satisfies the identity $u=v$ as well. Birkhoff's completeness theorem of equational logic (see \cite[Theorem II.14.19]{BuSa81}) shows that this notion (which we have given a semantic definition) can be captured by
a natural set of inference rules. We need not going into more detail here because the completeness theorem is not utilized in this paper.

Given an algebraic structure $\mA$, an \emph{\ib} for $\mA$ is any set $\Sigma\subseteq\Id{\mA}$ such that every identity in $\Id{\mA}$ follows from $\Sigma$. A structure $\mA$ is said to be \emph{\fb} if it possesses a finite \ib; otherwise, $\mA$ is called \emph{\nfb}.

Given any set $\Sigma$ of identities of type $\tau$,  the class of all algebraic structures satisfying all identities from $\Sigma$ is called the \emph{variety defined by $\Sigma$}. It is easy to see that the satisfaction of an identity is inherited by forming direct products and taking \emph{divisors} (that is, homomorphic images of substructures) so that each variety is closed under these two operators. Varieties can be characterized by this closure property (the HSP-theorem; see \cite[Theorem II.11.9]{BuSa81}).

A variety is \emph{\fb} if it can be defined by a finite set of identities; otherwise it is \emph{\nfb}. Given an algebraic structure $\mA$, the variety defined by $\Id\mA$ is denoted by $\var\mA$ and called the \emph{variety generated by $\mA$}. By the very definition, $\mA$ and $\var\mA$ are simultaneously finitely or \nfb.

\subsection{Varieties defined by identities with $\le n$ variables}\label{subsec:boundedvariablenumber}

Given an algebraic structure $\mA$ and a positive integer $n$, we denote by $\Id^{(n)}\!\mA$ the set of all identities $u=v$ from $\Id\mA$ such that $\alf(u)\cup\alf(v)\subseteq\{x_1,x_2,\dots,x_n\}$. Let $\var^{(n)}\!\mA$ stand for the variety defined by $\Id^{(n)}\!\mA$. We need the two following properties of varieties of this form.

\begin{lemma}[{\!\mdseries\cite[Proposition IV.3.9]{Cohn}}]
\label{lem:n-generated}
Let $\mA$ and $\mB$ be algebraic structures of the same type, $n$ a positive integer. The structure $\mB$ belongs to the variety $\var^{(n)}\!\mA$ if and only if all its $n$-generated substructures lie in the variety $\var\mA$.
\end{lemma}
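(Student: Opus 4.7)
The plan is to work directly from the defining property of $\var^{(n)}\!\mA$: it consists of exactly those structures (of the same type as $\mA$) that satisfy every identity of $\mA$ whose total alphabet lies in $\{x_1,\dots,x_n\}$. Both implications then reduce to relating evaluations in $n$-generated substructures of $\mB$ to identities of $\mA$ in at most $n$ variables.

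For the ``if'' direction, suppose every $n$-generated substructure of $\mB$ lies in $\var\mA$, and pick an arbitrary identity $u=v$ in $\Id^{(n)}\!\mA$, so $\alf(u)\cup\alf(v)\subseteq\{x_1,\dots,x_n\}$. Any evaluation of $u=v$ in $\mB$ is specified by the choice of values $b_1,\dots,b_n\in B$ for $x_1,\dots,x_n$; the substructure $\mB'$ of $\mB$ generated by these elements is $n$-generated, hence by assumption belongs to $\var\mA$ and therefore satisfies $u=v$. Since the evaluation was arbitrary, $\mB$ satisfies $u=v$, giving $\mB\in\var^{(n)}\!\mA$.

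For the ``only if'' direction, assume $\mB\in\var^{(n)}\!\mA$ and let $\mB'\subseteq\mB$ be generated by elements $b_1,\dots,b_n$. To conclude $\mB'\in\var\mA$ we must verify every identity $u=v\in\Id\mA$, even those with arbitrarily many variables $y_1,\dots,y_m$. The heart of the argument, and the main (if modest) point to get right, is that any evaluation $\phi\colon\{y_1,\dots,y_m\}\to B'$ has the form $\phi(y_j)=t_j(b_1,\dots,b_n)$ for suitable terms $t_j\in T(X)$, precisely because $b_1,\dots,b_n$ generate $\mB'$. Substituting $y_j\mapsto t_j(x_1,\dots,x_n)$ into $u=v$ produces an identity $u'=v'$ with alphabet contained in $\{x_1,\dots,x_n\}$; since $\Id\mA$ is closed under term-substitution, $u'=v'$ lies in $\Id^{(n)}\!\mA$ and therefore holds in $\mB$. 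Evaluating $u'=v'$ at $x_i\mapsto b_i$ recovers $\phi(u)=\phi(v)$, as required. No completeness theorem or HSP machinery is needed; the only step to check carefully is this closure of $\Id\mA$ under substitution, which is immediate from the semantic definition of satisfaction.
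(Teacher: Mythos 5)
Your proof is correct. The paper does not prove this lemma at all --- it is quoted verbatim from Cohn's \emph{Universal Algebra} (Proposition IV.3.9) --- so there is no in-paper argument to compare against; your two-direction argument is the standard one, and the one step you flag as needing care (closure of $\Id\mA$ under substitution of terms for variables, which turns an arbitrary identity of $\mA$ evaluated in an $n$-generated substructure into an identity from $\Id^{(n)}\!\mA$ evaluated at the generators) is exactly the right pivot and is justified correctly from the semantic definition of satisfaction.
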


\begin{lemma}[{\mdseries Birkhoff's finite basis theorem, \cite[Theorem V.4.2]{BuSa81}}]
\label{lem:BirkhoffFBT}
For every finite algebraic structure $\mA$ and every positive integer $n$, the variety $\var^{(n)}\!\mA$ is finitely based.
\end{lemma}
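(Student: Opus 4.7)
The plan is to reduce the problem to the finiteness of the $n$-generated relatively free algebra in $\var\mA$ and then to extract a finite basis from its operation tables.

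First I would form the relatively free algebra $F := F_n(\var\mA)$, realized concretely as the quotient of $\mT(\{x_1,\dots,x_n\})$ by the congruence that identifies $u$ and $v$ precisely when $u=v$ belongs to $\Id\mA$. The map sending the class of a term $u$ to the $A^n$-indexed family of its evaluations in $\mA$ embeds $F$ into $\mA^{A^n}$, whence $|F|\le|A|^{|A|^n}<\infty$. By construction, an identity in the variables $x_1,\dots,x_n$ holds in $\mA$ if and only if it holds in $F$, so $\Id^{(n)}\!\mA=\Id F$, and it suffices to exhibit a finite basis for $\Id F$.

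Second, I would choose, for each element $a\in F$, a term $t_a$ in $x_1,\dots,x_n$ whose class in $F$ equals $a$, taking $t_a:=x_j$ when $a$ is the class of the variable $x_j$. Let $\Sigma$ consist of all identities
\[
\mathbf{f}_i(t_{a_1},\dots,t_{a_{n_i}})=t_{f_i^F(a_1,\dots,a_{n_i})},
\]
where $i$ ranges over $\{1,\dots,k\}$ and $(a_1,\dots,a_{n_i})$ over $F^{n_i}$; for $n_i=0$ this reduces to an identity $\mathbf{f}_i=t_{f_i^F}$ between two constant terms. Because $F$ is finite and there are only finitely many operation symbols, $\Sigma$ is finite, and each identity in $\Sigma$ manifestly holds in $F$ by the choice of the $t_a$.

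The final step is to verify that $\Sigma$ implies every identity in $\Id F$. The key syntactic claim is that, for every term $u$ over $\{x_1,\dots,x_n\}$, the equation $u=t_{u^F}$ is a consequence of $\Sigma$, where $u^F$ denotes the class of $u$ in $F$. This I would prove by a straightforward structural induction on $u$: variables are handled by the convention $t_{u^F}=x_j$ when $u=x_j$, and for $u=\mathbf{f}_i(u_1,\dots,u_{n_i})$ the inductive step follows by substituting the terms $t_{u_j^F}$ for $u_j$ in the appropriate identity of $\Sigma$. Given this claim, any identity $u=v$ in $\Id F$ satisfies $u^F=v^F$, so $t_{u^F}$ and $t_{v^F}$ are literally the same term, and $\Sigma$ infers $u=t_{u^F}=t_{v^F}=v$. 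The only step requiring any real thought is the structural induction; everything else is bookkeeping, and the main quantitative input — finiteness of $F$ via the embedding into $\mA^{A^n}$ — is immediate.
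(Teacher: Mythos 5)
The paper offers no proof of this lemma at all: it is quoted directly from Burris and Sankappanavar, so there is nothing internal to compare against. Your argument is, in substance, the standard ``operation table'' proof found there: the $n$-generated relatively free algebra $F$ of $\var\mA$ embeds into $\mA^{A^n}$ and is therefore finite; one fixes term representatives $t_a$ of its elements, records its operation tables as a finite set $\Sigma$ of identities in $x_1,\dots,x_n$, and shows by structural induction that $\Sigma$ collapses every term over $x_1,\dots,x_n$ onto its representative, so that any two terms with the same image in $F$ are consequences of $\Sigma$. Since the identities over $x_1,\dots,x_n$ holding in $F$ are exactly those holding in $\mA$, the set $\Sigma$ lies in $\Id^{(n)}\!\mA$ and implies all of $\Id^{(n)}\!\mA$; hence it is a finite basis for $\var^{(n)}\!\mA$. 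This is correct.

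Two imprecisions should be repaired. First, the claim $\Id^{(n)}\!\mA=\Id F$ and the announced reduction ``it suffices to exhibit a finite basis for $\Id F$'' are not right: $\Id F$ contains identities in more than $n$ variables, and $\var F$ can be a \emph{proper} subvariety of $\var^{(n)}\!\mA$. (Take $\mA$ the two-element left-zero semigroup and $n=1$: then $F$ is trivial, so $\var F$ is the trivial variety, while $\var^{(1)}\!\mA$ is the variety of all idempotent semigroups; your $\Sigma=\{x_1x_1=x_1\}$ is a basis for the latter, not for $\Id F$.) Your actual construction never uses this reduction---the induction derives from $\Sigma$ precisely the identities $u=v$ over $x_1,\dots,x_n$ with $u^F=v^F$, i.e.\ precisely $\Id^{(n)}\!\mA$---so the conclusion stands, but the sentence claiming equality of the two identity sets should be deleted or restricted to $n$-variable identities. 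Second, the convention $t_{x_j^F}:=x_j$ is ill-defined, and the base case of your induction fails, in the degenerate situation where distinct variables have the same class in $F$; this occurs only when $|A|=1$ and is fixed either by treating that case separately or by adjoining to $\Sigma$ the finitely many identities $x_j=t_{x_j^F}$.
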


\subsection{Inherently nonfinitely based structures}\label{subsec:infb}

A variety is said to be \emph{locally finite} if each of its finitely generated members is finite. A finite algebraic structure $\mA$ is called \emph{inherently \nfb} if $\mA$ is not contained in any finitely based locally finite variety. The variety generated by a finite structure is locally finite (this is an easy byproduct of the proof of the HSP-theorem; see \cite[Theorem II.10.16]{BuSa81}). Hence, a finite algebraic structure $\mA$ is \nfb{} if the variety $\var\mA$ contains an inherently \nfb\ structure.

Various versions of the following characterization of inherently \nfb\ structures are scattered over the literature (see \cite{McNSzeWi08} or, for a recent sample, \cite{ShWi23}), but, typically, they are not displayed as separate statements to which we could conveniently refer. Therefore, we provide a short proof for the sake of completeness.

\begin{proposition}
\label{prop:infb}
A finite algebraic structure $\mA$ is inherently \nfb\ if and only if for arbitrarily large positive integer $n$, there is an infinite, finitely generated structure $\mB_n$ of the same type as $\mA$, all of whose $n$-generated substructures lie in the variety $\var\mA$.
\end{proposition}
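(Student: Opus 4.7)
The plan is to prove both directions by combining Lemma~\ref{lem:n-generated} with Lemma~\ref{lem:BirkhoffFBT}; the equivalence essentially unwinds the definition of inherent non-finite basability through varieties of the form $\var^{(n)}\!\mA$.

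For the ``only if'' direction, I would fix a positive integer $n$ and apply Lemma~\ref{lem:BirkhoffFBT} to conclude that $\var^{(n)}\!\mA$ is \fb. Trivially $\mA\in\var^{(n)}\!\mA$. Since $\mA$ is inherently \nfb\ by assumption, the \fb\ variety $\var^{(n)}\!\mA$ cannot be locally finite, so it must contain some finitely generated infinite structure $\mB_n$. Lemma~\ref{lem:n-generated} then guarantees that every $n$-generated substructure of $\mB_n$ lies in $\var\mA$, which is exactly the property required. Note that $n$ was arbitrary, so the condition holds for arbitrarily large $n$.

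For the ``if'' direction, I argue by contradiction: suppose $\mA$ is contained in some \fb\ locally finite variety $\mathcal{V}$. Pick a finite set $\Sigma$ of identities that defines $\mathcal{V}$ and let $N$ be large enough that every identity in $\Sigma$ uses only variables from $\{x_1,\dots,x_N\}$. Since $\mA\in\mathcal{V}$, every identity of $\Sigma$ belongs to $\Id^{(N)}\!\mA$, and thus every structure satisfying $\Id^{(N)}\!\mA$ satisfies $\Sigma$; this gives the inclusion $\var^{(N)}\!\mA\subseteq\mathcal{V}$. Now take any $n\ge N$ for which the hypothesis supplies an infinite finitely generated structure $\mB_n$ with all $n$-generated substructures in $\var\mA$. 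By Lemma~\ref{lem:n-generated} we have $\mB_n\in\var^{(n)}\!\mA$, and since $n\ge N$ clearly gives $\var^{(n)}\!\mA\subseteq\var^{(N)}\!\mA$, we conclude $\mB_n\in\mathcal{V}$. But $\mathcal{V}$ is locally finite, forcing the finitely generated structure $\mB_n$ to be finite, contradicting the choice of $\mB_n$.

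Neither direction is particularly difficult; the only minor subtlety is being careful with the direction of the inclusion $\var^{(N)}\!\mA\subseteq\mathcal{V}$ in the reverse implication, and the observation that $\var^{(n)}\!\mA$ shrinks as $n$ grows (since more identities must be obeyed), which is why the hypothesis only needs to hold for arbitrarily large $n$ rather than for all $n$.
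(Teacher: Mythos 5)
Your proof is correct and follows essentially the same route as the paper: the ``only if'' direction is identical, and the ``if'' direction differs only in packaging --- where the paper verifies directly that $\mB_n$ satisfies each identity of $\Sigma$ by tracking homomorphisms into $n$-generated substructures, you invoke Lemma~\ref{lem:n-generated} to place $\mB_n$ in $\var^{(n)}\!\mA$ and then use the containments $\var^{(n)}\!\mA\subseteq\var^{(N)}\!\mA\subseteq\mathcal{V}$. Both arguments rest on the same two lemmas and on the same observation that identities in at most $n$ variables are only sensitive to $n$-generated substructures.
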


\begin{proof}
For the `only if' part, suppose that $\mA$ is inherently \nfb. By Lemma~\ref{lem:BirkhoffFBT}, the variety $\var^{(n)}\!\mA$ is finitely based for each positive integer $n$. Since $\mA$ belongs to $\var^{(n)}\!\mA$, this variety cannot be locally finite. Hence, for each $n$, there exists an infinite, finitely generated structure $\mB_n\in\var^{(n)}\!\mA$, and by Lemma~\ref{lem:n-generated}, all $n$-generated substructures of  $\mB_n$ lie in the variety $\var\mA$.

To prove the `if' part, we have to show that every \fb{} variety $\mathfrak V$ containing $\mA$ is not locally finite. Let $\Sigma$ be a finite \ib\ of $\mathfrak V$. Choose a positive integer $n$ such that all variables involved in the identities in $\Sigma$ occur in the set $\{x_1,\dots,x_n\}$ and there exists an infinite, finitely generated structure $\mB_n$ all of whose $n$-generated substructures lie in $\var\mA$. Take an arbitrary identity $u=v$ in $\Sigma$. The images of the terms $u$ and $v$ under an arbitrary homomorphism $\varphi\colon\mathcal{T}(X)\to\mB_n$ belong to the substructure $\mS$ of $\mB_n$ generated by the elements $\varphi(x_1),\dots,\varphi(x_n)$. By choice of $n$, the substructure $\mS$ belongs to the variety $\var\mA$. Then $\mS$ lies in the variety $\mathfrak V$ as $\mathfrak V$ contains $\mA$. The identity $u=v$ holds in $\mathfrak V$ whence it must be satisfied by $\mS$. This implies $\varphi(u)=\varphi(v)$.

We have thus verified that for each identity $u=v$ in $\Sigma$, the images of $u$ and $v$ coincide under an arbitrary homomorphism $\mathcal{T}(X)\to\mB_n$. Hence, $\mB_n$ satisfies all identities in $\Sigma$. Since $\Sigma$ is an \ib{} for $\mathfrak V$, we conclude that $\mB_n$ belongs to $\mathfrak V$. Since $\mB_n$ is infinite and finitely generated, the variety $\mathfrak V$ is not locally finite.
\end{proof}

\subsection{The \ais{} $\End(\mC_m)$ is inherently nonfinitely based for $m\ge 4$}\label{subsec:mge4}

Let $C_m$ denote the chain $0<1<\dots<m-1$. We consider the semilattice $\mC_m:=(C_m,+)$ with addition defined by $k+\ell:=\max\{k,\ell\}$ for all $k,\ell\in C_m$. The \ais{} $\End(\mC_m)$ of all endomorphisms of $\mC_m$ is the main object of the present paper.

The constant endomorphism $\omega$ that sends every element of the chain $C_m$ to its least element 0 is a neutral element of $(\End(\mC_m),+)$. The set $\End^0(\mC_m)$ consisting of all endomorphisms of $\mC_m$ that fix 0 forms a subsemiring in $\End(\mC_m)$. Clearly, $\omega$ is a multiplicative absorbing element in this subsemiring.

In~\cite{Dolinka09endo}, Dolinka considered $(\End^0(\mC_m),+,\cdot,\omega)$, for which he used the notation $\mathbf{O}_{m-1}$, as an algebraic structure of type $(2,2,0)$. Lemmas~5 and~6 in~\cite{Dolinka09endo} show that $(\End^0(\mC_m),+,\cdot,\omega)$ is inherently \nfb{} for each $m\ge 4$. Of course, this readily implies that for each $m\ge 4$, the structure $\mE_m:=(\End(\mC_m),+,\cdot,\omega)$ also is inherently \nfb, even though this corollary was not mentioned in~\cite{Dolinka09endo}\footnote{The apparent reason for this omission is that in~\cite{Dolinka09endo}, semirings were defined as structures $(R,+,\cdot,0)$ in which the constant 0 is both neutral for addition and absorbing for multiplication. While $(\End^0(\mC_m),+,\cdot,\omega)$ falls within the range of this definition, $(\End(\mC_m),+,\cdot,\omega)$ does not since $\omega$ is not absorbing for $(\End(\mC_m),\cdot)$.}.

Here, we consider \ais{}s as structures of type $(2,2)$ and show that the \ais{} $\End(\mC_m)$ with $m\ge 4$ remains inherently nonfinitely based in this setting too.

\begin{proposition}
\label{prop:infbmge4}
For each $m\ge 4$,  the \ais{} $\End(\mC_m)$ is inherently \nfb.
\end{proposition}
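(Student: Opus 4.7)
My plan is to lift Dolinka's result to the $(2,2)$-setting. Dolinka~\cite{Dolinka09endo} established that $(\End^0(\mC_m),+,\cdot,\omega)$ is inherently nonfinitely based as a $(2,2,0)$-structure for $m\ge 4$. The bridge is the elementary observation that $\End^0(\mC_m)$ is a $(2,2)$-subsemiring of $\End(\mC_m)$: composition and pointwise addition both preserve the property of fixing $0$. Consequently $\var\End^0(\mC_m)\subseteq\var\End(\mC_m)$ when both varieties are taken in type $(2,2)$.

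I would apply Proposition~\ref{prop:infb} in both directions. Its statement and proof are stated for an arbitrary type, so in type $(2,2,0)$ Dolinka's theorem yields, for each $n$, an infinite, finitely generated $(2,2,0)$-structure $\mB_n'$ all of whose $n$-generated $(2,2,0)$-substructures belong to $\var(\End^0(\mC_m),+,\cdot,\omega)$. I then take $\mB_n$ to be the $(2,2)$-reduct of $\mB_n'$. It is still infinite, and still finitely generated in type $(2,2)$, because adjoining $\omega^{\mB_n'}$ to a finite $(2,2,0)$-generating set of $\mB_n'$ produces a finite $(2,2)$-generating set of $\mB_n$.

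The main verification is that every $n$-generated $(2,2)$-substructure $\mathcal{U}$ of $\mB_n$ lies in $\var\End(\mC_m)$. If $T\subseteq\mB_n$ has $|T|\le n$ and generates $\mathcal{U}$ in type $(2,2)$, let $\mathcal{U}'$ denote the $(2,2,0)$-substructure of $\mB_n'$ generated by $T$; by hypothesis, $\mathcal{U}'\in\var(\End^0(\mC_m),+,\cdot,\omega)$. Every $(2,2)$-identity of $\End^0(\mC_m)$ is \emph{a fortiori} a $(2,2,0)$-identity of $(\End^0(\mC_m),+,\cdot,\omega)$ since it does not mention $\omega$, so the $(2,2)$-reduct of $\mathcal{U}'$ belongs to $\var\End^0(\mC_m)\subseteq\var\End(\mC_m)$. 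Since $\mathcal{U}$ is a $(2,2)$-substructure of this reduct, $\mathcal{U}\in\var\End(\mC_m)$. The `if' direction of Proposition~\ref{prop:infb} applied to $\End(\mC_m)$ now yields the desired conclusion.

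I expect no serious obstacle. The only mildly delicate point is keeping track of the distinction between $(2,2)$- and $(2,2,0)$-generation, but this causes no real difficulty: any $(2,2,0)$-substructure of $\mB_n'$ is automatically a $(2,2)$-substructure containing the additional element $\omega^{\mB_n'}$, so the $(2,2,0)$-closure of any subset $T$ always contains its $(2,2)$-closure.
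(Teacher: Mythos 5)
Your argument is correct and follows essentially the same route as the paper: apply Proposition~\ref{prop:infb} in both directions, pass to the $(2,2)$-reducts of the witnesses, and note that the constant comes ``for free'' when comparing $(2,2)$- with $(2,2,0)$-generation. The only (harmless) variation is that you transfer variety membership via the subsemiring inclusion $\End^0(\mC_m)\le\End(\mC_m)$ together with the observation that identities not mentioning $\omega$ pass to reducts, whereas the paper first upgrades Dolinka's result to the structure $(\End(\mC_m),+,\cdot,\omega)$ and then invokes the HSP-theorem; both versions are sound.
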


\begin{proof}
As mentioned above, the structure $\mE_m$ of type $(2,2,0)$ is inherently \nfb. By Proposition~\ref{prop:infb}, for arbitrarily large positive integer $n$, there is an infinite, finitely generated structure $\mB_n$ of type $(2,2,0)$, all of whose $n$-generated substructures lie in the variety $\var\mE_m$. Denote by $z$ the value of the nullary operation of $\mB_n$. We let $\mB'_n$ stand for the $(2,2)$-reduct of $\mB_n$, that is, the \ais{} obtained from $\mB_n$ by forgetting its nullary operation. The \ais{} $\mB'_n$ is infinite (it has the same carrier as $\mB_n$) and finitely generated (if a finite set $Y$ generates $\mB_n$, then the set $Y\cup\{z\}$ generates $\mB'_n$). Take an arbitrary $n$-generated subsemiring $\mS'$ of $\mB'_n$ and consider the subsemiring $\mS$ generated by $\mS'\cup\{z\}$. Since $\mS$ contains $z$, this subsemiring is closed under all operations of $\mB_n$, including the nullary one. Therefore, we can treat $\mS$ as a substructure of type $(2,2,0)$. Notice that $\mS$ is $n$-generated as such a substructure---the added generator $z$ does not count since the nullary operation  provides $z$ `for free'.

By the choice of $\mB_n$, its $n$-generated substructure $\mS$ belongs to the variety $\var\mE_m$. The HSP-theorem then says that $\mS$ is a homomorphic image of a substructure $\mT$ of a direct power of $\mE_m$. Passing to the $(2,2)$-reducts of all structures involved, we get that the \ais{} obtained from $\mS$ is a homomorphic image of the \ais{} obtained from $\mT$, and the latter \ais{} is a subsemiring of a direct power of the \ais{} $\End(\mC_m)$. Since the \ais{} $\mS'$ is a subsemiring of the $(2,2)$-reduct of $\mS$, we conclude that $\mS'$ belongs to the variety generated by $\End(\mC_m)$.

Thus, we see that $\mB'_n$ is an infinite, finitely generated \ais{} such that all its $n$-generated subsemirings lie in the variety $\var\End(\mC_m)$. Since $n$ is arbitrarily large, Proposition~\ref{prop:infb} applies, showing that $\End(\mC_m)$ is inherently \nfb.
\end{proof}

\begin{remark}
\label{rem:omit0}
The argument in the proof of Proposition~\ref{prop:infbmge4} is pretty general and can be used to show that omitting finitely many nullary operations cannot destroy the property of a finite algebraic structure to be inherently nonfinitely based. In contrast, the property is known to be sensitive to omitting unary operations. For instance, consider the 4-element semigroup
\[
B_0:=\langle e,f,a\mid e^2=e,\ f^2=f,\ eaf=a,\ ef=fe=0\rangle=\{e,f,a,0\}
\]
equipped with unary operations $x\mapsto x^*:=\begin{cases} e&\text{if $x=a$},\\x&\text{otherwise}\end{cases}$ and $x\mapsto x^+:=\begin{cases} f&\text{if $x=a$},\\ x&\text{otherwise}\end{cases}$.
Peter Jones has proved that the algebraic structure $(B_0,\cdot,{}^*,{}^+)$ of type $(2,1,1)$ is inherently nonfinitely based~\cite{Jones13} while each of the structures $(B_0,\cdot,{}^*)$ and $(B_0,\cdot,{}^+)$ of type $(2,1)$ is finitely based~\cite{Jones18}.
\end{remark}

\section{The \ais{}s $\mA_2^1$ and $\mB_2^1$}
\label{sec:a21}

The two 5-element semigroups
\begin{align*}
A_2&:=\langle e,a\mid eae=e^2=e,\ aea=a,\ a^2=0\rangle=\{e,a,ae,ea,0\},\\
B_2&:=\langle c,d\mid cdc=c,\ dcd=d,\ c^2=d^2=0\rangle=\{c,d,cd,dc,0\}
\end{align*}
play a distinguished role in both the structure theory of semigroups and the theory of semigroup varieties. The same can be said about the 6-element monoids $A_2^1$ and $B_2^1$ obtained by adjoining the identity element 1 to $A_2$ and $B_2$, respectively. For the present paper, it is important that each of these monoids can be equipped with addition making it an \ais.

\subsection{The \ais{} $\mB_2^1$}\label{subsec:b21}

Addition on the monoid $B_2^1$ (commonly known as the 6-\emph{element Brandt monoid}) is an instance of a general construction that is also needed in this paper, so we present this construction now.

Recall that elements $s,t$ of a semigroup $S$ are said to be \emph{inverses} of each other if $sts=s$ and $tst=t$. A~semigroup~$S$ is called \emph{regular} [respectively, \emph{inverse}] if every its element has an inverse [respectively, a unique inverse]. The unique inverse of an element $s$ of an inverse semigroup is denoted by $s^{-1}$. Inverse semigroups can therefore be thought of as algebraic structures of type (2,1) where the unary operation is defined by $s\mapsto s^{-1}$.

In every inverse semigroup $S$, the relation
\[
\le_{\mathrm{nat}}:= \{(s,t)\in S\times S\mid s=ss^{-1}t\}
\]
is a partial order compatible with both multiplication and inversion; see \cite[Section II.1]{Pet84} or \cite[pp. 21--23]{Law99}. This order is referred to as the \emph{natural partial order}. For a subset $H\subseteq S$, the infimum of $H$ with respect to $\le_{\mathrm{nat}}$ may not exist, but if $\inf H$ exists, then so do $\inf(sH)$ and $\inf(Hs)$ for every $s\in S$, and one has $\inf(sH) = s(\inf H)$ and $\inf(Hs) = (\inf H)s$ \cite[Proposition 1.22]{Sch73}; see also \cite[Proposition 19]{Law99}. Therefore, if the partially ordered set $(S,\le_{\mathrm{nat}})$ happens to be an inf-semilattice, then letting
\begin{equation}
\label{eq:inf}
s\,+_{\mathrm{nat}}\,t:=\inf\{s,t\}
\end{equation}
for all $s,t\in S$ makes $(S,+_{\mathrm{nat}},\cdot)$ be an \ais. We will need a sufficient condition for $(S,\le_{\mathrm{nat}})$ to be an inf-semilattice from the second-named author's note \cite{Vol21}. Here it is stated in the notation of the present paper.

\begin{lemma}[\!{\mdseries\cite[Lemma~2.1]{Vol21}}]
\label{lem:as-ais}
If an inverse semigroup $S$ satisfies for some $p$, the identity $x^p=x^{p+1}$, then $(S,\le_{\mathrm{nat}})$ is an $\inf$-semilattice and $s+_{\mathrm{nat}} t=(st^{-1})^ps$ for all $s,t\in S$.
\end{lemma}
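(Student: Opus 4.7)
The plan is to verify directly that, for every $s, t \in S$, the element $w := (st^{-1})^p s$ is the infimum of $\{s, t\}$ under $\le_{\mathrm{nat}}$; this simultaneously yields that $(S, \le_{\mathrm{nat}})$ is an inf-semilattice and the claimed formula. A preliminary observation is that the identity $x^p = x^{p+1}$ iterates to $x^p = x^{p+k}$ for every $k \ge 0$, so $(x^p)^2 = x^{2p} = x^p$; applied to $x := st^{-1}$ this shows that $e := (st^{-1})^p$ is an idempotent and $w = es$.

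Next I would check that $w$ is a lower bound of both $s$ and $t$. The inequality $w \le_{\mathrm{nat}} s$ is immediate because $w = es$ writes $w$ as an idempotent times $s$. For $w \le_{\mathrm{nat}} t$, I would invoke the standard equivalent characterization $a \le_{\mathrm{nat}} b \iff a = ab^{-1}a$. Applying $x^p = x^{p+1}$ with $x = st^{-1}$ gives $e \cdot st^{-1} = (st^{-1})^{p+1} = e$, whence
\[
wt^{-1} = es \cdot t^{-1} = e(st^{-1}) = e,
\]
so that $wt^{-1}w = e \cdot es = es = w$, as required.

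The main effort will go into showing that $w$ is the \emph{greatest} lower bound. Assume $u \le_{\mathrm{nat}} s$ and $u \le_{\mathrm{nat}} t$, so that $u = s\cdot u^{-1}u = t \cdot u^{-1}u$; setting $\epsilon := u^{-1}u$ one gets $u = s\epsilon = t\epsilon$. The crucial computation, relying essentially on the commutativity of idempotents in an inverse semigroup, is
\[
(st^{-1})u = st^{-1}(t\epsilon) = s(t^{-1}t)\epsilon = (s\epsilon)(t^{-1}t) = u(t^{-1}t).
\]
Iterating (and using $(t^{-1}t)^2 = t^{-1}t$) yields $(st^{-1})^p u = u(t^{-1}t) = t\epsilon(t^{-1}t) = t(t^{-1}t)\epsilon = t\epsilon = u$. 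Consequently
\[
w \cdot u^{-1}u = es\epsilon = eu = (st^{-1})^p u = u,
\]
which is the characterization $u = w\cdot u^{-1}u$ equivalent to $u \le_{\mathrm{nat}} w$.

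Putting the three steps together gives $w = \inf\{s,t\}$, so $(S, \le_{\mathrm{nat}})$ is an inf-semilattice and $s +_{\mathrm{nat}} t = (st^{-1})^p s$. The delicate point, and the only place where the hypothesis $x^p = x^{p+1}$ is genuinely used, is the maximality step: one must iteratively apply $st^{-1}$ to $u$ and verify that this action stabilizes after $p$ steps, which is exactly what idempotency of $(st^{-1})^p$ provides.
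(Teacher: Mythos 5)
Your proof is correct: the verification that $w=(st^{-1})^p s$ is a lower bound of $\{s,t\}$ (via the equivalent forms $a\le_{\mathrm{nat}}b\Leftrightarrow a=ab^{-1}a\Leftrightarrow a=ba^{-1}a$ of the natural order) and the stabilization argument $(st^{-1})^k u=u(t^{-1}t)=u$ for any common lower bound $u$, using commutativity of idempotents, are all sound. The paper itself gives no proof of this lemma---it is imported verbatim from \cite[Lemma~2.1]{Vol21}---and your direct computation is essentially the standard argument used there.
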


By the definition of the 6-element Brandt monoid, $c^2=d^2=0$, and it is easy to see that $s^2=s$ for each $s\in B_2^1\setminus\{c,d\}$. Therefore, $B_2^1$ satisfies the identity $x^2=x^3$, and Lemma~\ref{lem:as-ais} applies. Thus, $(B_2^1,+_{\mathrm{nat}},\cdot)$ is an \ais; we denote it by $\mB_2^1$. It is known and easy to verify that $+_{\mathrm{nat}}$ is a unique addition making $B_2^1$ an \ais.

It is well known that on each semilattice $\mS=(S,+)$, the relation $\le$ defined by
\[
s\le s'\Leftrightarrow s+s'=s'\ \text{ for }\ s,s'\in S
\]
is a partial order on $S$ called the \emph{semilattice order}. For all elements $s,t\in S$, their sum $s+t$ coincides with the supremum of $s$ and $t$ for the semilattice order so that the order suffices to recover addition. When dealing with additive reducts of \ais{}s in the sequel, we always prefer drawing their Hasse diagrams to typing their Cayley tables, as diagrams encode addition more concisely as well as more visually.

The semilattice order of (the additive reduct of) the semiring $\mB_2^1$ is shown in Fig.~\ref{fig:b21}; observe that this order is opposite to the natural partial order of $B_2^1$ as an inverse semigroup.\\[-0.5ex]
\begin{figure}[tbh]
\centering
\begin{tikzpicture}[x=1cm,y=1.2cm]
\node at (0,-1)    (U)  {$1$};
\node at (-0.5,0) (EA)  {$cd$};
\node at (-1.5,0) (E) {$c$};
\node at (0.5,0)  (AE) {$dc$};
\node at (1.5,0)  (A)  {$d$};
\node at (0,1)    (Z)  {$0$};
\draw (A)  -- (Z);
\draw (EA) -- (Z);
\draw (AE) -- (Z);
\draw (E)  -- (Z);
\draw (EA) -- (U);
\draw (AE) -- (U);
\end{tikzpicture}
\caption{The semilattice order of the \ais{} $\mB_2^1$}\label{fig:b21}
\end{figure}

The FBP for the semiring $\mB_2^1$ was solved by the second-named author \cite{Vol21} and, independently, by Marcel Jackson,  Miaomiao Ren, and Xianzhong Zhao~\cite{JackRenZhao} who proved that this semiring is \nfb.

\subsection{The \ais{} $\mA_2^1$}\label{subsec:a21}

The monoid $A_2^1$ is regular, but not inverse since $e$ and $a$ are two different inverses of the element $e$. Thus, one cannot define addition on $A_2^1$ via \eqref{eq:inf}. Instead, we may embed $A_2^1$ into the multiplicative reduct of the semiring $\End(\mC_3)$ of all endomorphisms of the chain $0<1<2$ so that the image of $A_2^1$ in $\End(\mC_3)$ is closed under addition. There exist two embeddings of this sort: one where the endomorphisms corresponding to the elements of $A_2^1$ fix 2, and one where they fix 0. The first embedding is shown in Fig.~\ref{fig:A2asOPT}.
\begin{figure}[htb]
\centering
\begin{tikzpicture}
[scale=0.75]
\foreach \x in {0.75,2.25,3.5,5,6.25,7.75,9,10.5,11.75,13.25,14.5,16} \foreach \y in {0.5,1.5,2.5} \filldraw (\x,\y) circle (2pt);
	\draw (0.75,0.5) edge[-latex] (2.25,0.5);
	\draw (0.75,1.5) edge[-latex] (2.25,1.5);
    \draw (0.75,2.5) edge[-latex] (2.25,2.5);
	\draw (3.5,0.5) edge[-latex] (5,1.5);
	\draw (3.5,1.5) edge[-latex] (5,1.5);
    \draw (3.5,2.5) edge[-latex] (5,2.5);
	\draw (6.25,0.5) edge[-latex] (7.75,0.5);
	\draw (6.25,1.5) edge[-latex] (7.75,2.5);
    \draw (6.25,2.5) edge[-latex] (7.75,2.5);
	\draw (9,0.5) edge[-latex] (10.5,1.5);
	\draw (9,1.5) edge[-latex] (10.5,2.5);
    \draw (9,2.5) edge[-latex] (10.5,2.5);
	\draw (11.75,0.5) edge[-latex] (13.25,0.5);
	\draw (11.75,1.5) edge[-latex] (13.25,0.5);
    \draw (11.75,2.5) edge[-latex] (13.25,2.5);
    \draw (14.5,0.5) edge[-latex] (16,2.5);
	\draw (14.5,1.5) edge[-latex] (16,2.5);
    \draw (14.5,2.5) edge[-latex] (16,2.5);
\foreach \x in {0.5,2.5,3.25,5.25,6,8,8.75,10.75,11.5,13.5,14.25,16.25} \node at (\x,0.5) {\tiny 0};
\foreach \x in {0.5,2.5,3.25,5.25,6,8,8.75,10.75,11.5,13.5,14.25,16.25} \node at (\x,1.5) {\tiny 1};
\foreach \x in {0.5,2.5,3.25,5.25,6,8,8.75,10.75,11.5,13.5,14.25,16.25} \node at (\x,2.5) {\tiny 2};
\node at (1.5,-0.15)   {$1$};
\node at (4.25,-0.15)  {$ea$};
\node at (7,-0.15)     {$ae$};
\node at (9.75,-0.15)  {$a$};
\node at (12.5,-0.15)  {$e$};
\node at (15.25,-0.15) {$0$};
\end{tikzpicture}
\caption{Representing the elements of ${A}^1_2$ by the endomorphisms of~the chain $0<1<2$ that fix 2}\label{fig:A2asOPT}
\end{figure}
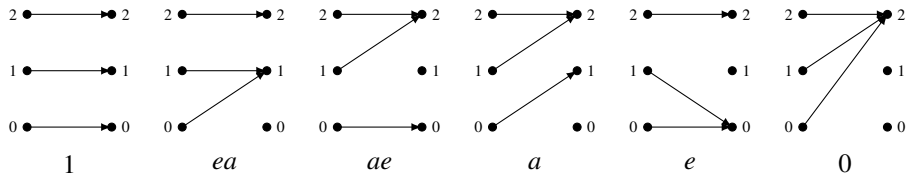

The endomorphisms of~the chain $0<1<2$ that fix 2 form a subsemiring in the semiring $\End(\mC_3)$, and this subsemiring's addition induces addition in $A_2^1$. We denote the resulting \ais{} by $\mA_2^1$. The semilattice order of $\mA_2^1$ is shown in Fig.~\ref{fig:a21}. It corresponds to the point-wise order for the endomorphisms of~the chain $0<1<2$ in which $\alpha\le\beta$ means that $x\alpha\le x\beta$ for each $x=0,1,2$.
\begin{figure}[ht]
\centering
\begin{tikzpicture}
[scale=0.8]
\node at (0,-1)   (E)  {$e$};
\node at (0,0)    (U)  {$1$};
\node at (-1.5,1) (EA) {$ea$};
\node at (1.5,1)  (AE) {$ae$};
\node at (0,2)    (A)  {$a$};
\node at (0,3)    (Z)  {$0$};
\draw (A)   -- (Z);
\draw (EA)  -- (A);
\draw (AE)  -- (A);
\draw (EA)  -- (U);
\draw (AE)  -- (U);
\draw (E)   -- (U);
\end{tikzpicture}
\caption{The semilattice order of the \ais{} $\mA_2^1$}\label{fig:a21}
\end{figure}

The image of the alternative embedding of $A_2^1$ in $\End(\mC_3)$ consists of the endomorphisms of~the chain $0<1<2$ that fix 0. This image also is a subsemiring in $\End(\mC_3)$, being nothing but the \ais{} $\End^0(\mC_3)$, aka $\mathbf{O}_{2}$ in Dolinka's notation~\cite{Dolinka09endo}. The semilattice order of $\End^0(\mC_3)$ is the point-wise order for the endomorphisms of~the chain $0<1<2$, and it can be easily verified to be dual to that of $\mA_2^1$. Since the semilattice shown in Fig.~\ref{fig:a21} is actually a self-dual lattice, we see that not only the multiplicative but also the additive reducts of the \ais{}s $\mA_2^1$ and $\End^0(\mC_3)$ are isomorphic. However, the \ais{}s themselves are not isomorphic! Say, $\mA_2^1$ has an element that is absorbing for both addition and multiplication while $\End^0(\mC_3)$ has no such element. Moreover, the identities satisfied by these \ais{}s are different: for instance, $\mA_2^1$ satisfies the identity $x+x^2=x^2$ that fails in $\End^0(\mC_3)$, and $\End^0(\mC_3)$ satisfies the identity $x+x^2=x$ that fails in $\mA_2^1$.

It has been the FBP for the semiring $\mA_2^1$ that gave impetus to the present paper. It follows from Mark Sapir's classification of inherently \nfb{} semigroups \cite{Sa87a,Sa87b} that six is the minimum size of such a semigroup, and the monoids $A_2^1$ and $B_2^1$ are the only inherently \nfb{} semigroups of this size. Since the \ais{} $\mB_2^1$ has been shown to be \nfb, the natural question of whether or not $\mA_2^1$ behaves the same way has become rather intriguing. To answer this question, a novel approach has been developed, and the new tool has allowed us to solve the FBP for $\End(\mC_3)$ too. However, the FBP for $\End^0(\mC_3)$ still remains open.

\subsection{Outline of the proof that $\mA_2^1$ and $\End(\mC_3)$ are \nfb}\label{subsec:plan} Now, with the necessary notions and notation being introduced, we are in a position to explain how our proof proceeds.

To show that the \ais{}s $\mA_2^1$ and $\End(\mC_3)$ are \nfb, we construct an infinite series of finite inverse semigroups $(S_n,\cdot,{}^{-1})$, $n\ge2$, each of which satisfies the identity $x^2=x^3$ and has the two following properties:
\begin{itemize}
  \item[(i)] the semigroup $(S_n,\cdot)$ does not lie in the semigroup variety generated by the multiplicative reduct of $\End(\mC_3)$;
  \item[(ii)] all $(n-1)$-generated inverse subsemigroups of $(S_n,\cdot,{}^{-1})$ belong to the inverse semigroup variety generated by $(B_2^1,\cdot,{}^{-1})$.
\end{itemize}
By Lemma~\ref{lem:as-ais}, each inverse semigroup $(S_n,\cdot,{}^{-1})$ converts into the \ais{} $(S_n,+_{\mathrm{nat}},\cdot)$ via \eqref{eq:inf}. Property (i) ensures that none of these semirings lies in the variety $\var\End(\mC_3)$, and from (ii), one can deduce that all $(n-1)$-generated subsemirings of $(S_n,+_{\mathrm{nat}},\cdot)$ belong to the variety $\var\mB_2^1$. A standard reasoning (detailed in Section~\ref{sec:main} below) then shows that any \ais{} $\mR$ such that $\mB_2^1\in\var\mR$ and $\mR\in\var\End(\mC_3)$ is \nfb. (In order to apply the outlined argument to $\mA_2^1$ and $\End(\mC_3)$, one has to verify the containment $\mB_2^1\in\var\mA_2^1$; we will do this in the next subsection.)

A similar overall tactic has been utilized in our earlier articles \cite{GuVo23a,GuVo23b} on the FBP for \ais{}s, but there, we have re-used a series of inverse semigroups constructed by Ji\v{r}\'\i{} Ka\softd{}ourek in his  breakthrough paper~\cite{Kad03} on the FBP for finite inverse semigroups. Property (ii) for that series was established in~\cite{Kad03} so that the most complicated part of the job had already been handled. The major difference between the present paper and \cite{GuVo23a,GuVo23b} is that we cannot employ Ka\softd{}ourek's semigroups anymore. So, we have to introduce a new series $(S_n,\cdot,{}^{-1})$ and verify (ii) for it. This will be done in Section~\ref{sec:semigroups-sn}, after recalling, in Section~\ref{sec:kadourek}, a result from~\cite{Kad91} that we need for establishing property (ii) for our series.

\subsection{$\mB_2^1$ lies in the variety $\var\mA_2^1$}\label{subsec:a21b21relation}

Recall that an \ais\ $\mS$ \emph{divides} another \ais\ $\mT$ if $\mS$ is a homomorphic image of a subsemiring of $\mT$.

\begin{proposition}
\label{prop:b21-in-a21}
The \ais\ $\mB_2^1$ divides the direct square $\mA_2^1\times \mA_2^1$.
\end{proposition}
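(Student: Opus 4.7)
The plan is to exhibit a subsemiring $S$ of $\mA_2^1 \times \mA_2^1$ together with a surjective semiring homomorphism $\phi : S \to \mB_2^1$; this is exactly what ``divides'' means.

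The first step is choosing generators that behave like $c$ and $d$. Set $u := (a, e)$ and $v := (e, a)$. The defining multiplicative relations $e^2 = e$, $a^2 = 0$, $eae = e$, $aea = a$ of $A_2^1$ yield by direct computation
\[
uv = (ae, ea), \quad vu = (ea, ae), \quad uvu = u, \quad vuv = v, \quad (uv)^2 = uv, \quad (vu)^2 = vu,
\]
while $u^2 = (0, e)$ and $v^2 = (e, 0)$. Thus the five elements $T := \{(1,1), u, v, uv, vu\}$ reproduce among themselves exactly the multiplicative relations between the five non-zero elements $\{1, c, d, cd, dc\}$ of $B_2^1$, with the sole caveat that $u^2, v^2$ fall outside $T$ rather than being literally $(0,0)$.

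The second step is to let $S$ be the subsemiring generated by $\{u, v, (1,1)\}$, put $K := S \setminus T$, and define $\phi : S \to \mB_2^1$ by sending the elements of $T$ to their counterparts in $\{1, c, d, cd, dc\}$ and every element of $K$ to $0$. The proof that $\phi$ is a semiring homomorphism then has two parts. (a)~For each ordered pair $(x, y) \in T \times T$, both $xy$ and $x+y$ lie in $T \cup K$, and $\phi$ respects the operations of $\mB_2^1$; additions are controlled by the semilattice orders of Figs.~\ref{fig:a21} and~\ref{fig:b21}, with representative instances being $u + v = (a, a) \in K$ matching $c + d = 0$, $(1,1) + u = (a, 1) \in K$ matching $1 + c = 0$, and $(1,1) + uv = uv$ matching $1 + cd = cd$. (b)~$K$ is a two-sided semiring ideal of $S$, i.e., $K + S,\, KS,\, SK \subseteq K$.

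The main obstacle is verifying (b). It requires enumerating $S$ explicitly (a routine but somewhat laborious computation shows $|S| = 23$) and checking that none of the five elements of $T$ can be produced as a sum or product in which at least one operand lies in $K$. No deeper theory is needed beyond the multiplication and semilattice tables of $\mA_2^1$, but careful bookkeeping is essential; the argument is essentially the same kind of finite, mechanical check as the verification in (a), only larger in volume.
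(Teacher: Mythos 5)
Your proposal is correct and takes essentially the same route as the paper: the paper also works with the subsemiring of $\mA_2^1\times\mA_2^1$ generated by $(1,1)$, $(e,a)$, $(a,e)$ and collapses precisely the complement of your set $T$ to zero, merely organizing the collapse as two successive Rees quotients (first by the ideal of pairs with a zero coordinate, then by an $8$-element ideal of the resulting $13$-element semiring). Your kernel $K$ is exactly the union of those two ideals, and your count $|S|=23$ agrees with the paper's enumeration, so the remaining ``routine but laborious'' check in your step (b) is the same finite verification the paper likewise leaves to the reader.
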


\begin{proof}
Consider the subsemiring $\mB$ of $\mA_2^1\times \mA_2^1$ generated by the pairs $(1,1)$, $(e,a)$, and $(a,e)$. Since 0 is an absorbing element for both multiplication and addition in the \ais\ $\mA_2^1$, the set $\mN:=\{(x,y)\in\mB\mid x=0\ \text{ or }\ y=0 \}$ forms an ideal in both the multiplicative and additive reducts of $\mB$. We can therefore pass to the Rees quotient semiring $\mB/\mN$. The non-zero elements of $\mB/\mN$ are in a one-to-one correspondence with pairs $(x,y)\in\mB$ such that neither $x$ nor $y$ is 0. A direct calculation shows that there are twelve such pairs:
\begin{itemize}
  \item the generators $(1,1)$, $(e,a)$, $(a,e)$;
  \item $(1,a)=(1,1)+(e,a)$, \  $(a,1)=(1,1)+(a,e)$, \ $(a,a)=(e,a)+(a,e)$;
  \item $(ea,ae)=(e,a)(a,e)$, \ $(ae,ea)=(a,e)(e,a)$;
  \item $(ae,a)=(ae,ea)+(e,a)$, \ $(a,ae)=(ea,ae)+(a,e)$;
  \item $(ea,a)=(ea,ae)+(e,a)$, \ $(a,ea)=(ae,ea)+(a,e)$.
\end{itemize}
The semilattice order on the semiring $\mB/\mN$ is shown in Fig.~\ref{fig:bovern}.
\begin{figure}[hbt]
\centering
\begin{tikzpicture}[x=1cm,y=1.2cm]
\node at (0,0)  (E)   {$(1,1)$};
\node at (-4,0) (C)   {$(a,e)$};
\node at (4,0)  (D)   {$(e,a)$};
\node at (-4,1) (C1)  {$(a,1)$};
\node at (-2,1) (CD)  {$(ea,ae)$};
\node at (2,1)  (DC)  {$(ae,ea)$};
\node at (4,1)  (D1)  {$(1,a)$};
\node at (-4,2) (C2)  {$(a,ae)$};
\node at (-2,2) (CD2)  {$(a,ea)$};
\node at (2,2)  (DC2) {$(ea,a)$};
\node at (4,2)  (D2)  {$(ae,a)$};
\node at (0,4)  (Z)   {$0$};
\node at (0,3)  (AA)  {$(a,a)$};
\draw (E)  -- (C1);
\draw (E)  -- (CD);
\draw (E)  -- (DC);
\draw (E)  -- (D1);
\draw (C)  -- (C1);
\draw (D)  -- (D1);
\draw (AA)  -- (Z);
\draw (C1)  -- (C2);
\draw (C1)  -- (CD2);
\draw (D1)  -- (D2);
\draw (D1)  -- (DC2);
\draw (CD)  -- (C2);
\draw (CD)  -- (DC2);
\draw (DC)  -- (D2);
\draw (DC)  -- (CD2);
\draw (AA)  -- (C2);
\draw (AA)  -- (CD2);
\draw (AA)  -- (D2);
\draw (AA)  -- (DC2);
\draw [red] plot [smooth] coordinates { (-4.8,-0.1) (-2,1.5) (2,1.5) (4.8,-0.1)};
\end{tikzpicture}
\caption{The order of the \ais{} $\mB/\mN$}\label{fig:bovern}
\end{figure}

It is routine to verify that the 8-element set
\[
\{0,(1,a),(a,1),(ae,a),(a,ae),(ea,a),(a,ea),(a,a)\}
\]
forms an ideal in both the multiplicative and additive reducts of $\mB/\mN$ (in Fig.~\ref{fig:bovern}, the elements of this set lie above the red curve). The Rees quotient of $\mB/\mN$ over this ideal is easily seen to be isomorphic to the \ais\ $\mB_2^1$.
\end{proof}

Proposition~\ref{prop:b21-in-a21} readily yields the result we need.
\begin{corollary}\label{cor:b21-in-a21}
The \ais{} $\mB_2^1$ lies in the variety $\var\mA_2^1$
\end{corollary}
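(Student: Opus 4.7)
The corollary is essentially a one-step consequence of Proposition~\ref{prop:b21-in-a21} combined with the standard closure properties of varieties, so my plan is to invoke the HSP-theorem directly. By the definition of a variety as a class of algebras closed under direct products, subalgebras, and homomorphic images, the variety $\var\mA_2^1$ contains the direct square $\mA_2^1\times\mA_2^1$ (closure under P) as well as every subsemiring of $\mA_2^1\times\mA_2^1$ (closure under S) and every homomorphic image of such a subsemiring (closure under H).

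Proposition~\ref{prop:b21-in-a21} asserts precisely that $\mB_2^1$ is a homomorphic image of a subsemiring of $\mA_2^1\times\mA_2^1$, i.e., an HS-image of a P-construct over $\mA_2^1$. Chaining the three closure properties therefore places $\mB_2^1$ inside $\var\mA_2^1$, which is exactly the statement of the corollary.

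There is no real obstacle here; the entire content of the corollary lies in Proposition~\ref{prop:b21-in-a21} that has just been established by the explicit construction of the subsemiring $\mB$, the ideal $\mN$, and the further Rees quotient identifying $\mB_2^1$. The only thing to flag for the reader is that the notion of ``divides'' used in the previous subsection is precisely the composition of the operators H and S, so that ``$\mS$ divides $\mT$'' together with $\mT\in\var\mT$ immediately yields $\mS\in\var\mT$.
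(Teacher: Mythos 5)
Your argument is correct and coincides with the paper's own reasoning: the paper simply states that Proposition~\ref{prop:b21-in-a21} ``readily yields'' the corollary, the unstated justification being exactly the HSP closure you spell out (divisibility is the composite of the operators H and S, and the direct square is obtained by P). Nothing further is needed.
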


\section{Ka\softd{}ourek's criterion}
\label{sec:kadourek}

A semigroup $S$ is called \emph{combinatorial} if all subgroups of $S$ are trivial. Here we recall Ka\softd{}ourek's criterion~\cite{Kad91} for a finite combinatorial inverse semigroup to satisfy all identities of the $6$-element Brandt monoid $(B_2^1,\cdot,{}^{-1})$.

We need some definitions and notation. We denote by $E(S)$ the set of all idempotents of $S$. If $X$ is a subset of $S$, we let $E(X) := X \cap E(S)$. For $e,f \in E(S)$, we write $e \le f$ if $ef = fe = e$. Now let $(S,\cdot,{}^{-1})$ be a finite combinatorial inverse semigroup and let $e,g\in E(S)$ be such that $e\le g$. For any idempotent $h$ in the $\mathscr D$-class $D_g$ of the idempotent $g$, there exists a unique element $a \in D_g$ with $g = aa^{-1}$, $h = a^{-1}a = a^{-1}ga$. This then determines a map $\pi_{g,e}\colon E(D_g)\to E(D_e)$ by
\[
h\pi_{g,e} := a^{-1}ea = (ea)^{-1}(ea).
\]
This map produces an idempotent $f\le h$ whose position with respect to the ``anchor'' idempotent $e$ of $D_e$ precisely corresponds to the position $h$ has had with respect to the ``anchor'' $g$ of its $\mathscr D$-class, see Fig.~\ref{fig:projection} where horizontal and vertical arrows symbolize multiplying respectively on the right and on the left.
\begin{figure}[ht]
\begin{center}
\unitlength .7mm
\begin{picture}(120,75)
\thinlines
\gasset{ExtNL=y,Nw=1,Nh=1,Nfill=y,fillgray=0.5,NLdist=1}
\node[NLangle=180](A)(10,5){$a^{-1}e$}
\node[NLangle=0,NLdist=2](B)(40,5){$f=a^{-1}ea$}
\node[NLangle=180](C)(10,35){$e$}
\node[NLangle=0](D)(40,35){$ea$}
\drawedge[ELside=r](A,B){$a$}
\drawedge(C,D){$a$}
\drawedge[ELside=r](C,A){$a^{-1}$}
\drawedge(D,B){$a^{-1}$}
\node[NLangle=180](E)(80,45){$a^{-1}g$}
\node[NLangle=0](F)(110,45){$h=a^{-1}ga$}
\node[NLangle=180,NLdist=2](G)(80,75){$g$}
\node[NLangle=0](H)(110,75){$ga$}
\drawedge[ELside=r](E,F){$a$}
\drawedge(G,H){$a$}
\drawedge[ELside=r](G,E){$a^{-1}$}
\drawedge(H,F){$a^{-1}$}
\put(21,18){\Large$D_e$}
\put(91,58){\Large$D_g$}
\gasset{dash={2 1.4}{0}}
\drawedge[ELside=r](G,C){$\pi_{g,e}$}
\drawedge(F,B){$\pi_{g,e}$}
\end{picture}
\caption{The map $\pi_{g,e}$}
\label{fig:projection}
\end{center}
\end{figure}
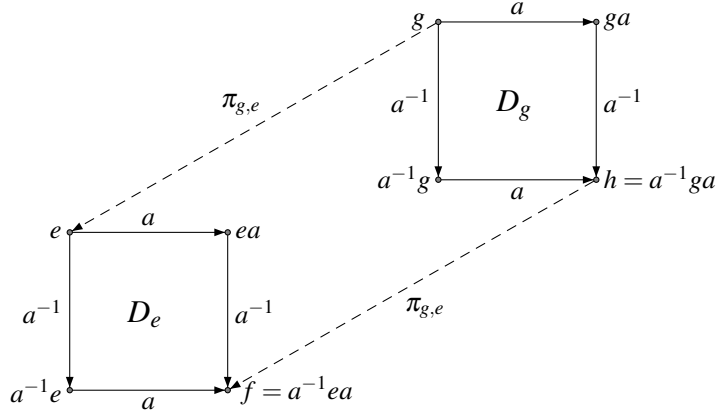

The set $S/\mathscr D$ of all $\mathscr D$-classes of the semigroup $S$ has a natural partial order defined as follows: for $X,Y\in S/\mathscr D$,
\[
Y \le X\ \text{ if and only if }\ e \le f\ \text{ for some }\ e \in E(Y),\ f \in E(X).
\]
Let $X,Y$ be $\mathscr D$-classes of $S$ and $Y\le X$. We define two symmetric relations on $E(Y)$. The first one---the \textit{projection relation}---is defined by
\[
\pi(X,Y) := \left\{(h_1\pi_{g,e},h_2\pi_{g,e}) \mid g,h_1,h_2 \in E(X),\ e \in E(Y),\ e \le g\right\}.
\]
The second relation contains all pairs of idempotents in $E(Y)$ which have a common upper bound in $E(X)$:
\[
\rho(X,Y) :=\left\{(f_1,f_2) \in E(X) \mid f_1,f_2 \le g\ \text{ for some }\ g \in E(Y)\right\}.
\]
For any $Y \in S/\mathscr D$, we write $[Y)_S := \{XY\in S/\mathscr D \mid Y \le X\}$. A (possibly empty) subset $\mathcal K$ of $S/\mathscr D$ is called a \textit{filter} if $[Z)_S\subseteq\mathcal K$ for all $Z\in\mathcal K$. Given $Y \in S/\mathscr D$ and a filter $\mathcal K \subseteq [Y)_S$, we denote by $\tau(\mathcal K ,Y)$ the equivalence relation being the transitive closure of the relation
\[
\bigcup\left\{\pi(X_1,Y) \mid X_1 \in \mathcal K\right\}\cup \bigcup\left\{\rho(X_2,Y) \mid X_2 \in [Y)_S \setminus\mathcal K\right\}.
\]
We say that the filter $\mathcal K$ \textit{separates} idempotents $e,f \in E(Y)$ if $(e,f)\notin\tau(\mathcal K,Y)$.

Now we can state Ka\softd{}ourek's result.

\begin{proposition}[\!\!{\mdseries\cite[Theorem 2.3]{Kad91}}]
\label{prop:kadourek}
A finite combinatorial inverse semigroup $(S,\cdot,{}^{-1})$ lies in the inverse semigroup variety generated by the $6$-element Brandt monoid $(B_2^1,\cdot,{}^{-1})$ if and only if the following condition holds:
\begin{itemize}
\item[\textup{($\ast$)}] for any $\mathscr D$-classes $X,Y$ of $S$ and for any $e,f \in E(Y)$ such that $e \le g$ and $f \nleq g$ for some $g \in E(X)$, there exists a filter $\mathcal K \subseteq [Y)_S$ which does not include $X$ and separates $e$ from $f$.
\end{itemize}
\end{proposition}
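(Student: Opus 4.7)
The plan is to establish this characterization in the usual two directions of a variety-membership criterion, while being guided by the fact that the $6$-element Brandt monoid $B_2^1$ has a very restrictive $\mathscr{D}$-class structure: three $\mathscr{D}$-classes $\{1\} > D > \{0\}$, where $D$ has exactly two idempotents $cd, dc$, both sitting below the single idempotent in the top $\mathscr{D}$-class, and neither lying below the other.

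For the necessity of condition $(\ast)$, I would first verify it directly in $B_2^1$. By the structure just described, the hypothesis that $e\le g$ while $f\nleq g$ for idempotents $e,f\in E(Y)$ and $g\in E(X)$ is essentially vacuous in $B_2^1$ (the only nontrivial fiber above $D$ relates $cd$ and $dc$ to $1$, and both are below $1$), so $(\ast)$ holds trivially. I would then show that condition $(\ast)$ is preserved under taking inverse subsemigroups, homomorphic images, and direct products: for subsemigroups one intersects the relevant $\mathscr{D}$-classes with the subset and restricts $\pi, \rho$; for homomorphic images one uses the fact that for inverse semigroup morphisms $\mathscr{D}$-classes and idempotents behave well and preimages of filters are filters; for direct products one builds product filters from the components. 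Since $(\ast)$ then holds in every member of the variety generated by $B_2^1$, it holds in $S$.

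For sufficiency, the strategy would be to prove that any finite combinatorial inverse semigroup $(S,\cdot,{}^{-1})$ in which $(\ast)$ holds is a subdirect product of divisors of powers of $B_2^1$. Concretely, given two elements that need to be separated I would reduce, via Green's relations together with the inverse operation, to the case of separating two distinct idempotents $e,f$ in the same $\mathscr{D}$-class $Y$. Applying $(\ast)$ (with an $X$ witnessing the need) supplies a filter $\mathcal{K}\subseteq[Y)_S$ that separates $e$ from $f$, i.e.\ $(e,f)\notin\tau(\mathcal{K},Y)$. I would use this filter to construct a congruence on $S$: above $Y$ keep only $\mathscr{D}$-classes in $\mathcal{K}$ (collapsing the rest), and inside each surviving $\mathscr{D}$-class identify idempotents whose $\tau(\mathcal{K},Y)$-images coincide. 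The resulting quotient is a combinatorial inverse semigroup whose fibre structure is ``Brandt-like'' at each level, and the key technical claim is that such quotients embed into a power of $B_2^1$, proved by induction on the height of the $\mathscr{D}$-order, using at each level the fact that a Brandt semigroup of matrix units over the trivial group embeds in $(B_2^1)^k$ for appropriate $k$.

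The main obstacle will be this last sufficiency step. Verifying that the congruence defined from a separating filter really produces a quotient that divides a power of $B_2^1$ requires careful bookkeeping: the transitive closure defining $\tau(\mathcal{K},Y)$ mixes the projection relations $\pi(X_1,Y)$ for $X_1\in\mathcal{K}$ with the ``common upper bound'' relations $\rho(X_2,Y)$ for $X_2\notin\mathcal{K}$, and one must argue that collapsing along $\rho$ and respecting along $\pi$ are simultaneously compatible with multiplication and inversion on all of $S$. An induction on $|S/\mathscr{D}|$, controlling how the filter interacts with principal factors, should organize the argument, but the verification that the induced equivalence is in fact a congruence—rather than just an equivalence on idempotents—is where the proof becomes delicate.
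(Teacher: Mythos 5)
First, a point of comparison: the paper does not prove this proposition at all --- it is imported verbatim from Ka\softd{}ourek's paper \cite[Theorem 2.3]{Kad91} and used as a black box. So there is no ``paper's own proof'' to measure your sketch against; the relevant benchmark is Ka\softd{}ourek's original argument, which occupies a substantial part of a long technical paper and does not reduce to the closure checks you propose.

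As a proof, your proposal has genuine gaps in both directions. For necessity, the claim that condition ($\ast$) is preserved under inverse subsemigroups, homomorphic images, and direct products is asserted rather than proved, and the subsemigroup case in particular does not work the way you describe: the $\mathscr D$-classes of an inverse subsemigroup $T\le S$ are \emph{not} obtained by intersecting the $\mathscr D$-classes of $S$ with $T$, because the connecting element $a$ with $g=aa^{-1}$, $h=a^{-1}a$ that defines the projection $\pi_{g,e}$ need not lie in $T$; consequently the relations $\pi(X,Y)$, $\rho(X,Y)$, the order on $S/\mathscr D$, and the notion of filter all change non-trivially when passing to $T$, and ``restricting $\pi,\rho$'' is not a well-defined operation. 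A similar issue arises for quotients, where filters neither pull back nor push forward in any obvious way. Each of these preservation statements is a lemma requiring real work, and together they are comparable in difficulty to the theorem itself. For sufficiency, you explicitly leave open the central technical claim --- that the equivalence built from a separating filter is a congruence and that the resulting quotient divides a power of $B_2^1$ --- and this is precisely where the entire content of the theorem lives; the transitive closure $\tau(\mathcal K,Y)$ mixes relations of quite different character, and compatibility with multiplication is not at all automatic. As it stands the proposal is a plausible road map, not a proof, and since the paper treats the statement as a cited external result, the honest course here is to do the same rather than to attempt to reprove it.
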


\section{The semigroups $S_n$}
\label{sec:semigroups-sn}

For all $n\ge 2$, let $(S_n,\cdot,{}^{-1})$ stand for the inverse semigroup of partial one-to-one transformations on the set $\{0,1,\dots,3n+2\}$ generated by the following $n+1$ transformations:
\[
\chi:=\begin{pmatrix} n&n+1\\2n+1&2n+2\end{pmatrix}, \ \ \chi_i:=\begin{pmatrix} i-1&n+1+i&2n+1+i\\i&n+i&2n+2+i\end{pmatrix},\ i=1,2,\dots,n.
\]
The action of the generators of the semigroup $(S_n,\cdot,{}^{-1})$ is shown in Fig.~\ref{fig:semigroups-sn}.
\begin{figure}[ht]
\begin{center}
\unitlength 1.35mm
\begin{picture}(40,54)
\gasset{Nw=5,Nh=5,Nmr=3}
\node(u0)(0,52){\footnotesize 0}
\node(u1)(0,42){\footnotesize 1}
\node(u2)(0,32){\footnotesize 2}
\node[Nframe=n](u3)(0,22){$\vdots$}
\node(un-1)(0,12){\footnotesize n-1}
\node(un)(0,2){\footnotesize n}
\node(un+1)(20,52){\footnotesize n+1}
\node(un+2)(20,42){\footnotesize n+2}
\node(un+3)(20,32){\footnotesize n+3}
\node[Nframe=n](un+4)(20,22){$\vdots$}
\node(u2n)(20,12){\footnotesize 2n}
\node(u2n+1)(20,2){\footnotesize 2n+1}
\node(u2n+2)(40,52){\footnotesize 2n+2}
\node(u2n+3)(40,42){\footnotesize 2n+3}
\node(u2n+4)(40,32){\footnotesize 2n+4}
\node[Nframe=n](u2n+5)(40,22){$\vdots$}
\node(u3n+1)(40,12){\footnotesize 3n+1}
\node(u3n+2)(40,2){\footnotesize 3n+2}
\drawedge(u0,u1){$\chi_1$}
\drawedge(u1,u2){$\chi_2$}
\drawedge(u2,u3){$\chi_3$}
\drawedge(u3,un-1){$\chi_{n-1}$}
\drawedge(un-1,un){$\chi_n$}
\drawedge(un,u2n+1){$\chi$}
\drawedge(u2n+1,u2n){$\chi_n$}
\drawedge(u2n,un+4){$\chi_{n-1}$}
\drawedge(un+4,un+3){$\chi_3$}
\drawedge(un+3,un+2){$\chi_2$}
\drawedge(un+2,un+1){$\chi_1$}
\drawedge(un+1,u2n+2){$\chi$}
\drawedge(u2n+2,u2n+3){$\chi_1$}
\drawedge(u2n+3,u2n+4){$\chi_2$}
\drawedge(u2n+4,u2n+5){$\chi_3$}
\drawedge(u2n+5,u3n+1){$\chi_{n-1}$}
\drawedge(u3n+1,u3n+2){$\chi_n$}
\end{picture}
\caption{The action of the generators of the inverse semigroup $(S_n,\cdot,{}^{-1})$}\label{fig:semigroups-sn}
\end{center}
\end{figure}
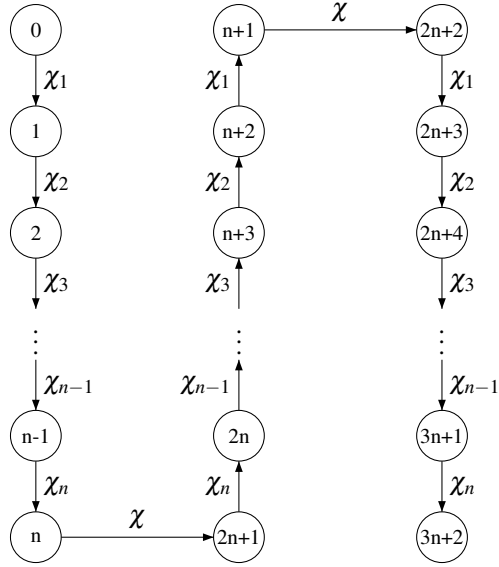

We need to compute computing all transformations in $S_n$. Clearly, $S_n$ contains the nowhere defined transformation which we denote by 0. Further, the following formulas hold:
\begin{equation}
\label{eq:formulas}
\begin{aligned}
\chi_i\chi_j={}&
\begin{cases}
\begin{pmatrix} i-1&2n+1+i\\i+1&2n+3+i\end{pmatrix}&\text{if }i=j-1,\\
\begin{pmatrix} n+1+i\\n-1+i\end{pmatrix}&\text{if }i=j+1,\\
0&\text{otherwise;}
\end{cases}\\
\chi_i\chi_j^{-1}={}&
\begin{cases}
\begin{pmatrix} i-1&n+1+i&2n+1+i\\i-1&n+1+i&2n+1+i\end{pmatrix}&\text{if }i=j,\\
0&\text{otherwise;}
\end{cases}\\
\chi_i^{-1}\chi_j={}&
\begin{cases}
\begin{pmatrix} i&n+i&2n+2+i\\i&n+i&2n+2+i\end{pmatrix}&\text{if }i=j,\\
0&\text{otherwise;}
\end{cases}
\end{aligned}
\end{equation}
\begin{equation}
\label{eq:formulas2}
\begin{aligned}
\chi\chi_i={}&
\begin{cases}
\begin{pmatrix} n+1\\2n+3\end{pmatrix}&\text{if }i=1,\\
\begin{pmatrix} n\\2n\end{pmatrix}&\text{if }i=n,\\
0&\text{otherwise;}
\end{cases}
\\
\chi_i\chi={}&
\begin{cases}
\begin{pmatrix} n+2\\2n+2\end{pmatrix}&\text{if }i=1,\\
\begin{pmatrix} n-1\\2n+1\end{pmatrix}&\text{if }i=n,\\
0&\text{otherwise;}
\end{cases}
\\
\chi\chi_j^{-1}&{}=\chi_j^{-1}\chi=0,\ j=1,2,\dots,n.
\end{aligned}
\end{equation}
For any $i,j\in\{0,1,\dots,n\}$ and $\ell,r\in\{0,1,\dots,3n+2\}$, we put
\[
\zeta_{ij}:=\begin{pmatrix} i&2n+2+i\\j&2n+2+j\end{pmatrix}\ \text{ and }\ \eta_{\ell r}:=\begin{pmatrix} \ell\\r\end{pmatrix}.
\]
It is easy to see that $\zeta_{ij}$ and $\eta_{\ell r}$ can be represented as products of elements of the form $\chi$, $\chi^{-1}$, $\chi_p$ and $\chi_p^{-1}$.
Hence $\zeta_{ij},\eta_{\ell r}\in S_n$.
Now let
\begin{align*}
B_i&:=\{\chi_i,\chi_i^{-1},\chi_i\chi_i^{-1},\chi_i^{-1}\chi_i\},\ i=1,2,\dots,n,\\
C&:=\{\zeta_{ij}\mid 1\le i,j\le n\},\\
D&:=\{\eta_{ij}\mid 0\le i,j\le 3n\},\\
E&:=\{\chi,\chi^{-1},\chi\chi^{-1},\chi^{-1}\chi\}.
\end{align*}
Using \eqref{eq:formulas} and \eqref{eq:formulas2}, it is easy to verify that
\begin{equation}\label{eq:rank decomposition}
S_n=\left(\bigcup_{i=1}^n B_i\right)\cup C\cup D\cup E\cup\{0\}.
\end{equation}
Recall that the \emph{rank} of a transformation of a finite set is the size of its image. One sees that $\bigcup_{i=1}^n B_i$ is the set of all transformations of rank 3 in $S_n$ while $C\cup E$ and $D$ are the sets of all transformations of rank 2 and rank~1, respectively.

Now we verify that the decomposition \eqref{eq:rank decomposition} is actually the partition of $S_n$ into $\mathscr D$-classes. Indeed, $\{0\}$ is obviously a $\mathscr D$-class. Since $\chi_i$ and $\chi_i^{-1}$ are inverses of each other, $B_i$ must be contained in a $\mathscr D$-class of $S_n$. It follows from~\eqref{eq:rank decomposition} that if $\pi\in S\setminus B_i$, then $\pi\in B_j$ for some $j\ne i$ or $\pi$ is of rank less than $3$. In any case, $\pi$ cannot be $\mathscr D$-related with elements of $B_i$. Thus, $B_i$ forms a $\mathscr D$-class of $S_n$. It follows from formulas~\eqref{eq:formulas2} that no element of $E$ is $\mathscr D$-related with any $\pi\notin E$ of rank 2. Evidently, all elements of $E$ are $\mathscr D$-related. Hence $E$ forms a $\mathscr D$-class of $S_n$.  Further, formulas~\eqref{eq:formulas} and~\eqref{eq:formulas2} imply that if $\pi\in S_n\setminus E$ and $\pi$ is of rank 2, then the domain and the image of $\pi$ have the form $\{i,2n+i\}$ for some $0\le i\le n$. Since $\zeta_{ij}=\zeta_{i\ell}\zeta_{\ell j}$ for any $0\le i,j,\ell\le n$, this implies that $C$ forms a $\mathscr D$-class of $S_n$. By a similar argument, we can show that $D$ is a $\mathscr D$-class of $S_n$ as well. 

The partially ordered set $S_n/\mathscr D$ has the form shown in Fig.~\ref{fig:D-classes}.
\begin{figure}[ht]
\begin{center}
\unitlength 1.3mm
\begin{picture}(95,81)
\gasset{Nw=10,Nh=5,Nmr=0}
\node[Nframe=n,Nw=5](B1)(2,80){$B_1:$}
\node(B1-1-1)(10,80){$\chi_1\chi_1^{-1}$}
\node(B1-2-1)(10,75){$\chi_1^{-1}$}
\node(B1-1-2)(20,80){$\chi_1$}
\node(B1-2-2)(20,75){$\chi_1^{-1}\chi_1$}
\node[Nframe=n,Nw=0,Nh=0](B1-b)(15,72.5){}

\node[Nframe=n,Nw=5](B2)(32,80){$B_2:$}
\node(B2-1-1)(40,80){$\chi_2\chi_2^{-1}$}
\node(B2-2-1)(40,75){$\chi_2^{-1}$}
\node(B2-1-2)(50,80){$\chi_2$}
\node(B2-2-2)(50,75){$\chi_2^{-1}\chi_2$}
\node[Nframe=n,Nw=0,Nh=0](B2-b)(45,72.5){}

\node[Nframe=n,Nw=5](B2)(61.5,77.5){$\cdots$}

\node[Nframe=n,Nw=5](Bn)(67,80){$B_n:$}
\node(B2-1-1)(75,80){$\chi_n\chi_n^{-1}$}
\node(B2-2-1)(75,75){$\chi_n^{-1}$}
\node(B2-1-2)(85,80){$\chi_n$}
\node(B2-2-2)(85,75){$\chi_n^{-1}\chi_n$}
\node[Nframe=n,Nw=0,Nh=0](B3-b)(80,72.5){}

\node[Nframe=n,Nw=5](C)(2,60){$C:$}
\node(C-1-1)(10,60){$\zeta_{00}$}
\node(C-2-1)(10,55){$\zeta_{10}$}
\node(C-3-1)(10,50){$\cdots$}
\node(C-4-1)(10,45){$\zeta_{n0}$}
\node(C-1-2)(20,60){$\zeta_{01}$}
\node(C-2-2)(20,55){$\zeta_{11}$}
\node(C-3-2)(20,50){$\cdots$}
\node(C-4-2)(20,45){$\zeta_{n1}$}
\node[Nw=30](C-1-3)(40,60){$\cdots$}
\node[Nw=30](C-2-3)(40,55){$\cdots$}
\node[Nw=30](C-3-3)(40,50){$\cdots$}
\node[Nw=30](C-4-3)(40,45){$\cdots$}
\node(C-1-4)(60,60){$\zeta_{0n}$}
\node(C-2-4)(60,55){$\zeta_{1n}$}
\node(C-3-4)(60,50){$\cdots$}
\node(C-4-4)(60,45){$\zeta_{nn}$}
\node[Nframe=n,Nw=0,Nh=0](C-t)(35,62.5){}
\node[Nframe=n,Nw=0,Nh=0](C-b)(35,42.5){}

\node[Nframe=n,Nw=5](E)(72,50){$E:$}
\node(E-1-1)(80,50){$\chi\chi^{-1}$}
\node(E-2-1)(80,45){$\chi^{-1}$}
\node(E-1-2)(90,50){$\chi$}
\node(E-2-2)(90,45){$\chi^{-1}\chi$}
\node[Nframe=n,Nw=0,Nh=0](E-b)(85,42.5){}

\node[Nframe=n,Nw=5](D)(12,30){$D:$}
\node(D-1-1)(20,30){$\eta_{00}$}
\node(D-2-1)(20,25){$\eta_{10}$}
\node(D-3-1)(20,20){$\cdots$}
\node(D-4-1)(20,15){$\eta_{3n+2,0}$}
\node(D-1-2)(30,30){$\eta_{01}$}
\node(D-2-2)(30,25){$\eta_{11}$}
\node(D-3-2)(30,20){$\cdots$}
\node(D-4-2)(30,15){$\eta_{3n+2,1}$}
\node[Nw=30](D-1-3)(50,30){$\cdots$}
\node[Nw=30](D-2-3)(50,25){$\cdots$}
\node[Nw=30](D-3-3)(50,20){$\cdots$}
\node[Nw=30](D-4-3)(50,15){$\cdots$}
\node[Nw=12](D-1-4)(71,30){$\eta_{0,3n+2}$}
\node[Nw=12](D-2-4)(71,25){$\eta_{1,3n+2}$}
\node[Nw=12](D-3-4)(71,20){$\cdots$}
\node[Nw=12](D-4-4)(71,15){$\eta_{3n+2,3n+2}$}
\node[Nframe=n,Nw=0,Nh=0](D-t)(45,32.5){}
\node[Nframe=n,Nw=0,Nh=0](D-b)(45,12.5){}

\node(0-1-1)(45,0){$0$}
\node[Nframe=n,Nw=0,Nh=0](0-t)(45,2.5){}

\drawedge[AHnb=0](B1-b,C-t){}
\drawedge[AHnb=0](B2-b,C-t){}
\drawedge[AHnb=0](B3-b,C-t){}
\drawedge[AHnb=0](C-b,D-t){}
\drawedge[AHnb=0](E-b,D-t){}
\drawedge[AHnb=0](D-b,0-t){}
\end{picture}
\caption{The partially ordered set $S_n/\mathscr D$}
\label{fig:D-classes}
\end{center}
\end{figure}
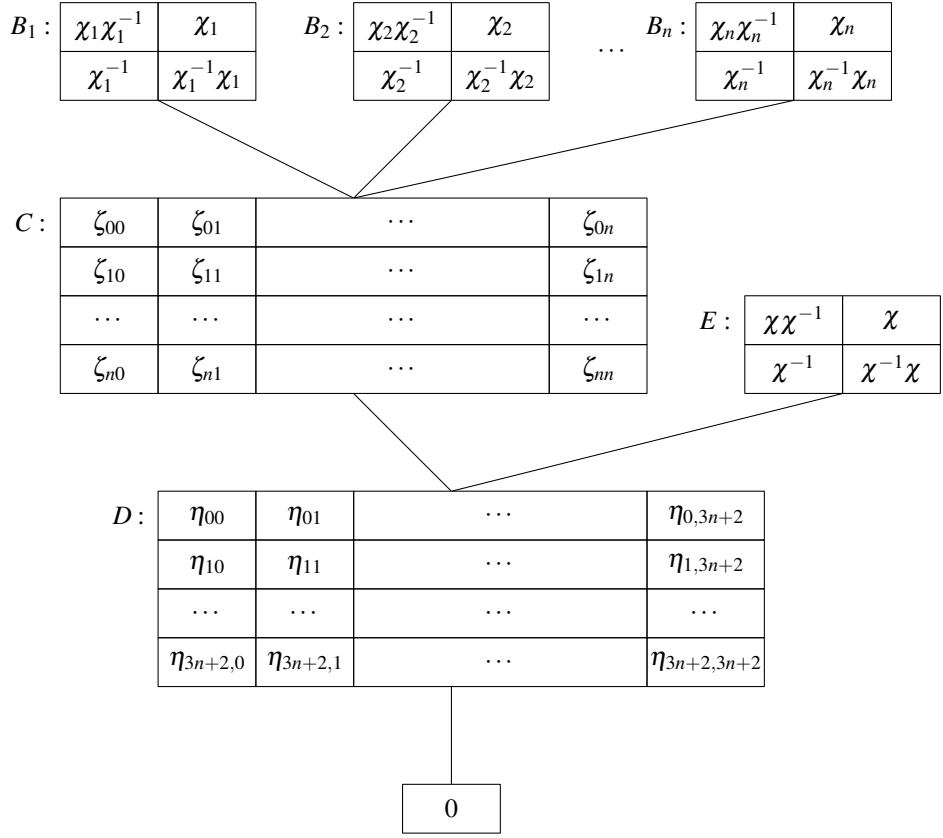

We turn to establish a crucial property of the inverse semigroup $(S_n,\cdot,{}^{-1})$ stated in the next proposition.

\begin{proposition}
\label{prop:Sn1}
Each inverse subsemigroup of $(S_n,\cdot,{}^{-1})$ generated by less than $n$ elements lies in the inverse semigroup variety generated by the $6$-element Brandt monoid $(B_2^1,\cdot,{}^{-1})$.
\end{proposition}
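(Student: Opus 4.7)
My approach is to apply the criterion of Proposition~\ref{prop:kadourek}. Let $T$ be an inverse subsemigroup of $(S_n,\cdot,{}^{-1})$ generated by a set $G$ of size $k<n$. Since $S_n$ consists of partial one-to-one transformations of a finite set, all subgroups of $S_n$, and hence of $T$, are trivial; thus $T$ is combinatorial and it suffices to verify condition~($\ast$) for $T$.

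Before verifying~($\ast$), I would describe the $\mathscr{D}$-class structure of $T$ as sitting inside that of $S_n$. Using the rank information in~\eqref{eq:rank decomposition} together with the multiplication rules~\eqref{eq:formulas} and~\eqref{eq:formulas2}, one checks that $\mathscr{D}$-relatedness in $T$ refines $\mathscr{D}$-relatedness in $S_n$, and in fact the $\mathscr{D}$-classes of $T$ are precisely the nonempty intersections $T\cap Z$ for $Z$ ranging over the $\mathscr{D}$-classes of $S_n$, with the order inherited from Fig.~\ref{fig:D-classes}.

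The crucial combinatorial ingredient is a pigeonhole-style observation: because $|G|<n$, there exists an index $\ell\in\{1,\dots,n\}$ such that the $\mathscr{H}$-class of $\chi_\ell$ in $S_n$ meets $T$ trivially. To establish this I would track, for each generator $t\in G$ (written as a product of $\chi,\chi^{-1},\chi_1,\chi_1^{-1},\dots,\chi_n,\chi_n^{-1}$), which of the three ``columns'' and which ``horizontal levels'' of Fig.~\ref{fig:semigroups-sn} its action can actually traverse. A counting argument then yields a level $\ell$ that is left inaccessible, so that either $T\cap B_\ell=\varnothing$ or $T\cap B_\ell$ consists only of idempotents. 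Either way, the class $T\cap B_\ell$ is largely disconnected from the rest of $T/\mathscr{D}$ in the sense that the projection relations $\pi(T\cap B_\ell,Y)$ degenerate.

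With this structural fact at hand, I would verify~($\ast$) as follows. Given a pair of $\mathscr{D}$-classes $X,Y$ of $T$ and idempotents $e\le g$, $f\not\le g$ with $g\in E(X)$, the natural filter to try is $\mathcal{K}:=[Y)_T\setminus\{X\}$, possibly shrunk further by removing the isolated class $T\cap B_\ell$ when necessary. One then computes $\tau(\mathcal{K},Y)$ from the projection and common-upper-bound relations and checks that $(e,f)\notin\tau(\mathcal{K},Y)$. I expect the main obstacle to be the case $Y=T\cap D$, since $D$ is the richest $\mathscr{D}$-class (all rank-$1$ transformations) and sits below every non-zero class; many pairs of idempotents $\eta_{ii},\eta_{jj}\in D$ have common upper bounds in $C$, in $E$, and in several $B_i$'s simultaneously, so that $\rho(X_2,Y)$ can be large. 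Here I plan to exploit the linear chain-like structure of Fig.~\ref{fig:semigroups-sn}: two single-point idempotents of $D$ can only be linked by a $\tau(\mathcal{K},Y)$-path when the corresponding positions are connected by a chain of generators present in $T$, and the missing level $\ell$ breaks enough of these chains to keep $e$ and $f$ in distinct blocks of $\tau(\mathcal{K},Y)$.
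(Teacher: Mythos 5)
Your overall strategy---pigeonhole on the classes $B_i$ to find an index $\ell$ that $T$ misses, then verify Ka\softd{}ourek's condition ($\ast$)---is the right one, and your pigeonhole observation is essentially the paper's starting point: since the elements of $B_\ell$ have rank $3$ and every product involving a factor outside $B_\ell$ has rank at most $2$ or lands in a different $B_j$, fewer than $n$ generators must miss some $B_\ell$ entirely, so $T\subseteq T_n(\ell):=S_n\setminus B_\ell$. At this point, however, the paper makes a simplification you miss: because the inverse semigroup variety generated by $(B_2^1,\cdot,{}^{-1})$ is closed under inverse subsemigroups, it suffices to verify ($\ast$) for the $n$ concrete semigroups $T_n(\ell)$, whose $\mathscr D$-classes and idempotents are completely explicit. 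You instead try to run ($\ast$) inside the arbitrary subsemigroup $T$, and two of your assertions fail there. First, the $\mathscr D$-classes of $T$ need not be the intersections $T\cap Z$: the inverse subsemigroup generated by the two idempotents $\chi_1\chi_1^{-1}$ and $\chi_1^{-1}\chi_1$ is $\{\chi_1\chi_1^{-1},\chi_1^{-1}\chi_1,0\}$, in which these idempotents are not $\mathscr D$-related even though both lie in $B_1$. Second, $[Y)_T\setminus\{X\}$ is essentially never a filter: since $Y\le X$, any filter containing $Y$ must contain all of $[Y)_T$, including $X$. The filters one actually needs are the empty one, sets of maximal classes such as $\{B_i\mid i\ne m,k\}$ or $\{E\}$, and the up-set $[C)$.

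The more serious gap is that the verification of ($\ast$)---the entire technical content of the proposition---is only announced, not carried out. For $Y\ne D$ the empty filter works because every relation $\rho(X_2,Y)$ with $Y\in\{B_i,C,E\}$ reduces to the equality relation on $E(Y)$; this easy half is absent from your sketch. For $Y=D$, the case you rightly flag as hard, one needs, depending on which class $X$ is and on the positions $s,t$ of the idempotents $\mu=\eta_{ss}$ and $\nu=\eta_{tt}$, roughly ten different filters (of the shapes $\{B_i\mid i\ne m,k\}$, $\{B_i\mid i\ne m,m+1,k\}$, $\{B_i\mid i\ne m-1,m,k\}$, $\{E\}$, $[C)$, and so on), together with an explicit computation of the transitive closure $\tau(\mathcal K,D)$ in each case to confirm that $\eta_{ss}$ and $\eta_{tt}$ fall into different blocks. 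Your remark that ``the missing level $\ell$ breaks enough of these chains'' is the correct intuition, but the common-upper-bound relations $\rho(C,D)$ and $\rho(B_i,D)$ glue together idempotents from all three columns of Fig.~\ref{fig:semigroups-sn}, so whether the resulting equivalence still separates $\eta_{ss}$ from $\eta_{tt}$ depends delicately on the choice of filter; without exhibiting the filters and computing $\tau(\mathcal K,D)$ there is no proof.
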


\begin{proof}
Let $T_n(k):=S_n\setminus B_k$, $k=1,2,\dots,n$. Evidently, $(T_n(k),\cdot,{}^{-1})$ forms an inverse subsemigroup of $(S_n,\cdot,{}^{-1})$ and any inverse subsemigroup of $(S_n,\cdot,{}^{-1})$ generated by less than $n$ elements is, in fact, a subsemigroup of $(T_n(k),\cdot,{}^{-1})$ for some $k$. Thus, it suffices to show that the inverse semigroup $(T_n(k),\cdot,{}^{-1})$ belongs to the inverse semigroup variety generated by $(B_2^1,\cdot,{}^{-1})$. For this, we need to verify the condition ($\ast$) of Proposition~\ref{prop:kadourek} for all $\mathscr D$-classes of the inverse semigroup $(T_n(k),\cdot,{}^{-1})$. Notice that the $\mathscr D$-classes of $(T_n(k),\cdot,{}^{-1})$ are $B_i$ with $i\ne k$, $C$, $D$, $E$, and $\{0\}$.

We fix an arbitrary pair $X$, $Y$ of $\mathscr D$-classes in $T_n(k)/\mathscr D$ such that there exist idempotents $\mu,\nu\in Y$, $\gamma \in X$ satisfying $\mu \le \gamma$ and $\nu\nleq \gamma$. Clearly, $\mu\ne \nu$  whence $Y\ne\{0\}$.  To verify the condition ($\ast$), we should exhibit a filter $\mathcal K\subseteq [Y)_{T_n(k)}$ that excludes $X$ and separates $\mu$ from $\nu$. Of course, the filter will depend on $X$ and $Y$ for which several options exist, so a case-by-case analysis is inevitable.

Before we proceed with the analysis, we introduce some notation and make some preliminary calculations. Given a set $A$, denote by $\nabla(A)$ [respectively, $\Delta(A)$] the universal [the equality] relation on $A$; we write $\nabla(a_1,a_2,\dots,a_s)$ [respectively, $\Delta(a_1,a_2,\dots,a_s)$] rather than $\nabla(\{a_1,a_2,\dots,a_s\})$ [respectively, $\Delta(\{a_1,a_2,\dots,a_s\})$]. For brevity, put $\eta_i:=\eta_{ii}$ for $i=0,1,\dots,3n+2$ and $\zeta_j:=\zeta_{jj}$ for $j=0,1,\dots,n$; these transformations are exactly the idempotents of the $\mathscr D$-classes $D$ and $C$, respectively. It is routine to calculate that
\begin{align*}
&\pi(B_i,C)=\nabla(\zeta_{i-1},\zeta_i),\\
&\rho(B_i,C)=\Delta(\zeta_{i-1},\zeta_i),\\
&\pi(B_i,D)=\nabla(\eta_{i-1},\eta_i)\cup \nabla(\eta_{n+i},\eta_{n+1+i})\cup \nabla(\eta_{2n+1+i},\eta_{2n+2+i}),\\
&\rho(B_i,D)=\nabla(\eta_{i-1},\eta_{n+1+i},\eta_{2n+1+i})\cup\nabla(\eta_i,\eta_{n+i},\eta_{2n+2+i}),\\
&\pi(C,D)=\nabla(\eta_0,\eta_1,\dots,\eta_n)\cup \nabla(\eta_{2n+2},\eta_{2n+3},\dots,\eta_{3n+2}),\\
&\rho(C,D)=\bigcup_{j=0}^n\nabla(\eta_j,\eta_{2n+2+j}),\\
&\pi(E,D)=\nabla(\eta_n,\eta_{2n+1})\cup \nabla(\eta_{n+1},\eta_{2n+2}),\\
&\rho(E,D)=\nabla(\eta_n,\eta_{n+1})\cup \nabla(\eta_{2n+1},\eta_{2n+2}),\\
&\rho(A,A)=\Delta(E(A))
\end{align*}
for all $i=1,2,\dots,k-1,k+1,\dots,n$ and $A\in T_n(k)/\mathscr D$.

Now we can start looking for a filter $\mathcal K\subseteq [Y)_{T_n(k)}$ that excludes $X$ and separates $\mu$ from $\nu$. If $Y\ne D$, then the empty filter $\mathcal K$ does the job. Indeed, $X \notin \mathcal K$ and the relation
\[
\bigcup\{\pi(X_1,Y) \mid X_1 \in\mathcal K\}
\]
is empty. The relation $\tau(\mathcal K ,Y)$ must then reduce to the transitive closure of
\[
\bigcup\left\{\rho(X_2,Y) \mid X_2 \in [Y)_{T_n(k)}\right\}.
\]
In view of the above, the latter set is the equality relation on $E(Y)$ in any case. Hence $\tau(\mathcal K ,Y)=\Delta(E(Y))$. Thus, $\mathcal K$ separates $\mu$ from $\nu$.

So, it remains to consider the case when $Y=D$. Then $\mu=\eta_s$ and $\nu=\eta_t$ for some $0\le s,t\le 3n+2$. Four cases are possible.

\smallskip

\textbf{Case 1:} $X=B_m$ for some $m\ne k$. By symmetry, we may assume without any loss that $m<k$.
Two subcases are possible.

\smallskip

\textbf{Case 1.1:} $\gamma=\chi_m\chi_m^{-1}$ and so $s\in\{m-1,n+1+m,2n+1+m\}$ but $t\notin\{m-1,n+1+m,2n+1+m\}$.
Four subcases are possible.

\smallskip

\textbf{Case 1.1.1:} $t\in\{n+m+2,\dots, n+k\}$. This is only possible when $m+1<k$. In this case, the filter
\[
\mathcal K_1:=\{B_i\mid 1\le i\le n,\, i\ne m,m+1,k\}
\]
does the job. Indeed, $X \notin \mathcal K_1$ and
\[
\begin{aligned}
&\bigcup\{\pi(X_1,Y) \mid X_1 \in\mathcal K_1\}=\bigcup_{j=1,\, j\ne m,m+1,k}^n\pi(B_j,D)=\\
&\bigcup_{j=1,\, j\ne m,m+1,k}^n\biggl(\nabla(\eta_{j-1},\eta_{j})\cup\nabla(\eta_{n+j},\eta_{n+1+j})\cup\nabla(\eta_{2n+1+j},\eta_{2n+2+j})\biggr),\\
&\bigcup\{\rho(X_2,Y) \mid X_2 \in[Y)_{T_n(k)}\setminus\mathcal K_1\}=\\
&\rho(E,D)\cup\rho(D,D)\cup\rho(C,D)\cup \rho(B_m,D)\cup \rho(B_{m+1},D)=\\
&\nabla(\eta_n,\eta_{n+1})\cup \nabla(\eta_{2n+1},\eta_{2n+2})\cup\Delta(E(D))\cup\biggl(\bigcup_{j=0}^n\nabla(\eta_{j},\eta_{2n+2+j})\biggr)\cup \\
&\nabla(\eta_{m-1},\eta_{n+1+m},\eta_{2n+1+m})\cup\nabla(\eta_m,\eta_{n+m},\eta_{2n+2+m})\cup\\
&\nabla(\eta_m,\eta_{n+2+m},\eta_{2n+2+m})\cup\nabla(\eta_{m+1},\eta_{n+1+m},\eta_{2n+2+m}).
\end{aligned}
\]
Then the transitive closure of the relation $\bigcup\{\pi(X_1,Y) \mid X_1 \in\mathcal K_1\}$ is
\[
\begin{aligned}
&\nabla(\eta_0,\eta_1,\dots,\eta_{m-1})\cup\nabla(\eta_{m+1},\eta_{m+2},\dots,\eta_{k-1})\cup\nabla(\eta_k,\eta_{k+1},\dots,\eta_n)\cup\\
&\nabla(\eta_{n+1},\eta_{n+2},\dots,\eta_{n+m})\cup\nabla(\eta_{n+m+2},\eta_{n+m+3},\dots,\eta_{n+k})\cup\\
&\nabla(\eta_{n+k+1},\eta_{n+k+2},\dots,\eta_{2n+1})\cup\nabla(\eta_{2n+2},\eta_{2n+3},\dots,\eta_{2n+m+1})\cup\\
&\nabla(\eta_{2n+m+3},\eta_{2n+m+4},\dots,\eta_{2n+k+1})\cup\nabla(\eta_{2n+k+2},\eta_{2n+k+3},\dots,\eta_{3n+2}),
\end{aligned}
\]
while the transitive closure of the relation $\bigcup\{\rho(X_2,Y) \mid X_2 \in[Y)_{T_n(k)}\setminus\mathcal K_1\}$ is
\[
\begin{aligned}
&\nabla(\eta_n,\eta_{n+1},\eta_{3n+2})\cup \nabla(\eta_0,\eta_{2n+1},\eta_{2n+2})\cup\\
&\Delta(E(D))\cup\biggl(\bigcup_{j=1, j\ne m-1,m}^{n-1}\nabla(\eta_{j},\eta_{2n+2+j})\biggr)\cup\\
&\nabla(\eta_{m-1},\eta_{m+1},\eta_{n+m+1},\eta_{2n+m+1},\eta_{2n+m+3})\cup\\
&\nabla(\eta_m,\eta_{n+m},\eta_{n+m+2},\eta_{2n+m+2}).
\end{aligned}
\]
It follows that the relation $\tau(\mathcal K_1,Y)$ coincides with
\[
\begin{aligned}
\nabla(&\eta_0,\eta_1,\dots,\eta_{m-1},\eta_{m+1},\eta_{m+2},\dots,\eta_{k-1},\eta_{n+m+1},\eta_{n+k+1},\eta_{n+k+1},\dots,\eta_{2n+m+1},\\
&\eta_{2n+m+3},\eta_{2n+m+4},\dots,\eta_{2n+k+1})\cup\\
\nabla(&\eta_m,\eta_k,\eta_{k+1},\dots, \eta_{n+m},\eta_{n+m+2},\eta_{n+m+3},\dots,\eta_{n+k},\\&\eta_{2n+m+2},\eta_{2n+k+2},\eta_{2n+k+3},\dots,\eta_{3n+2}).
\end{aligned}
\]
The two classes of the relation $\tau(\mathcal K_1 ,D)$ are shown in Fig.~\ref{fig:tau(K1,D)}; the elements lying in the first [respectively, second] $\tau(\mathcal K_1 ,D)$-class are connected by red [respectively, blue] lines.
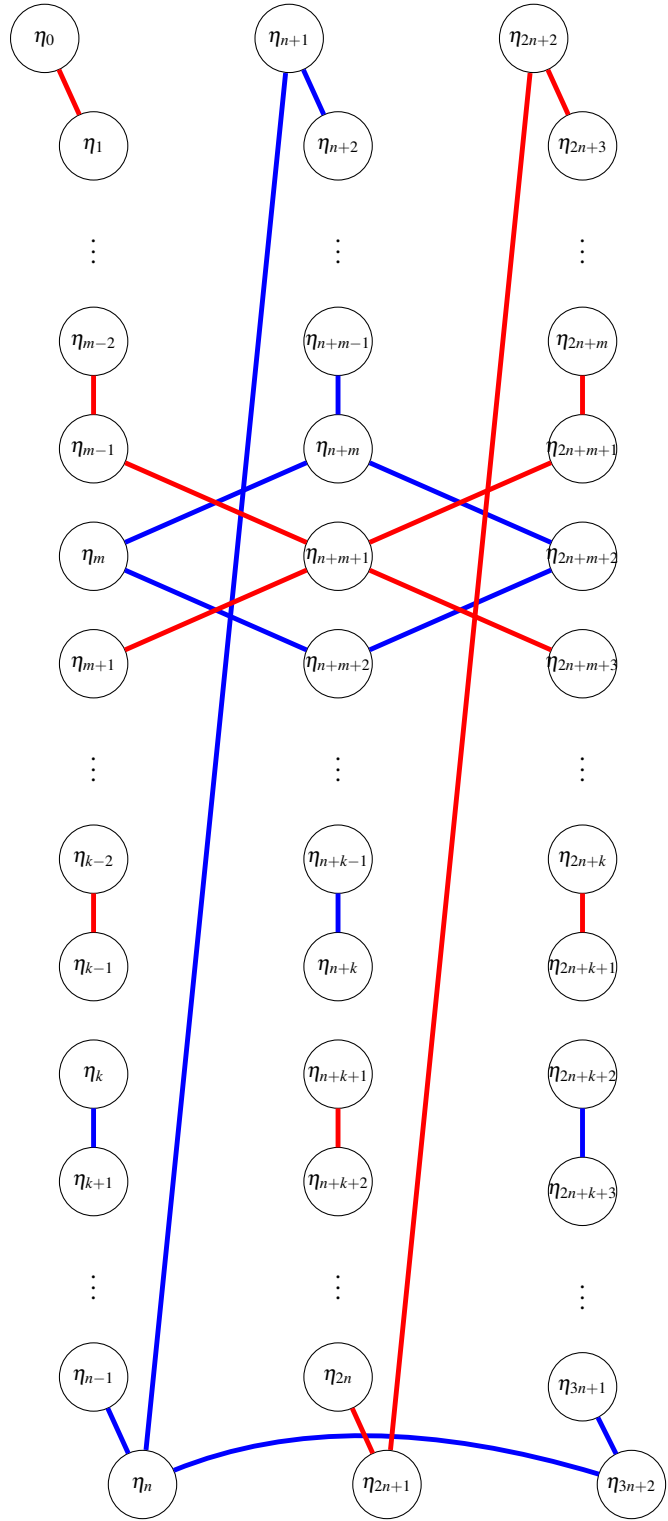
\begin{figure}
\begin{center}
\unitlength 1.3mm
\begin{picture}(60,157)(0,-2.5)
\gasset{Nw=7,Nh=7,Nmr=5}
\node(un)(10,2){\footnotesize $\eta_n$}
\node(un-1)(5,13){\footnotesize $\eta_{n-1}$}
\node[Nframe=n](uk+2)(5,23){$\vdots$}
\node(uk+1)(5,33){\footnotesize $\eta_{k+1}$}
\node(uk)(5,44){\footnotesize $\eta_{k}$}
\node(uk-1)(5,55){\footnotesize $\eta_{k-1}$}
\node(uk-2)(5,66){\footnotesize $\eta_{k-2}$}
\node[Nframe=n](um+2)(5,76){$\vdots$}
\node(um+1)(5,86){\footnotesize $\eta_{m+1}$}
\node(um)(5,97){\footnotesize $\eta_{m}$}
\node(um-1)(5,108){\footnotesize $\eta_{m-1}$}
\node(um-2)(5,119){\footnotesize $\eta_{m-2}$}
\node[Nframe=n](u2)(5,129){$\vdots$}
\node(u1)(5,139){\footnotesize $\eta_1$}
\node(u0)(0,150){\footnotesize $\eta_0$}

\node(u2n+1)(35,2){\footnotesize $\eta_{2n+1}$}
\node(u2n)(30,13){\footnotesize $\eta_{2n}$}
\node[Nframe=n](un+3+k)(30,23){$\vdots$}
\node(un+k+2)(30,33){\footnotesize $\eta_{n+k+2}$}
\node(un+k+1)(30,44){\footnotesize $\eta_{n+k+1}$}
\node(un+k)(30,55){\footnotesize $\eta_{n+k}$}
\node(un+k-1)(30,66){\footnotesize $\eta_{n+k-1}$}
\node[Nframe=n](un+m+3)(30,76){$\vdots$}
\node(un+m+2)(30,86){\footnotesize $\eta_{n+m+2}$}
\node(un+m+1)(30,97){\footnotesize $\eta_{n+m+1}$}
\node(un+m)(30,108){\footnotesize $\eta_{n+m}$}
\node(un+m-1)(30,119){\footnotesize $\eta_{n+m-1}$}
\node[Nframe=n](un+3)(30,129){$\vdots$}
\node(un+2)(30,139){\footnotesize $\eta_{n+2}$}
\node(un+1)(25,150){\footnotesize $\eta_{n+1}$}

\node(u3n+2)(60,2){\footnotesize $\eta_{3n+2}$}
\node(u3n+1)(55,12){\footnotesize $\eta_{3n+1}$}
\node[Nframe=n](u3n)(55,22){$\vdots$}
\node(u2n+k+3)(55,32){\footnotesize $\eta_{2n+k+3}$}
\node(u2n+k+2)(55,44){\footnotesize $\eta_{2n+k+2}$}
\node(u2n+k+1)(55,55){\footnotesize $\eta_{2n+k+1}$}
\node(u2n+k)(55,66){\footnotesize $\eta_{2n+k}$}
\node[Nframe=n](u2n+4+m)(55,76){$\vdots$}
\node(u2n+m+3)(55,86){\footnotesize $\eta_{2n+m+3}$}
\node(u2n+m+2)(55,97){\footnotesize $\eta_{2n+m+2}$}
\node(u2n+m+1)(55,108){\footnotesize $\eta_{2n+m+1}$}
\node(u2n+m)(55,119){\footnotesize $\eta_{2n+m}$}
\node[Nframe=n](u2n+4)(55,129){$\vdots$}
\node(u2n+3)(55,139){\footnotesize $\eta_{2n+3}$}
\node(u2n+2)(50,150){\footnotesize $\eta_{2n+2}$}

\gasset{AHnb=0,linewidth=.5,linecolor=blue}
\drawqbedge(un,30,12,u3n+2){}
\drawedge(um,un+m){}
\drawedge(um,un+m+2){}
\drawedge(uk,uk+1){}
\drawedge(un-1,un){}
\drawedge(un+1,un+2){}
\drawedge(un+m-1,un+m){}
\drawedge(un+m,u2n+m+2){}
\drawedge(un+m+2,u2n+m+2){}
\drawedge(un+k-1,un+k){}
\drawedge(u2n+k+2,u2n+k+3){}
\drawedge(u3n+1,u3n+2){}
\drawedge(un,un+1){}

\gasset{linewidth=.5,linecolor=red}
\drawedge(u0,u1){}
\drawedge(um-2,um-1){}
\drawedge(um-1,un+m+1){}
\drawedge(um+1,un+m+1){}
\drawedge(uk-2,uk-1){}
\drawedge(un+m+1,u2n+m+1){}
\drawedge(un+m+1,u2n+m+3){}
\drawedge(un+k+1,un+k+2){}
\drawedge(u2n,u2n+1){}
\drawedge(u2n+2,u2n+3){}
\drawedge(u2n+m,u2n+m+1){}
\drawedge(u2n+k,u2n+k+1){}
\drawedge(u2n+2,u2n+1){}
\end{picture}
\caption{The two classes of the relation $\tau(\mathcal K_1,D)$}
\label{fig:tau(K1,D)}
\end{center}
\end{figure}
Since $t\in\{n+m+2,\dots, n+k\}$, we see that $\mathcal K_1$ separates $\mu=\eta_s$ from $\nu=\eta_t$.

\smallskip

\textbf{Case 1.1.2:} $t\in\{m,\dots,k-1,n+1,\dots,n+m,2n+m+2,\dots,2n+k+1\}$. In this case, the filter
\[
\mathcal K_2:=\{B_i\mid 1\le i\le n,\, i\ne m,k\}
\]
does the job. Indeed, $X \notin \mathcal K_2$ and
\[
\begin{aligned}
&\bigcup\{\pi(X_1,Y) \mid X_1 \in\mathcal K_2\}=\bigcup_{j=1,\, j\ne m,k}^n\pi(B_j,D)=\\
&\bigcup_{j=1,\, j\ne m,k}^n\biggl(\nabla(\eta_{j-1},\eta_{j})\cup\nabla(\eta_{n+j},\eta_{n+1+j})\cup\nabla(\eta_{2n+1+j},\eta_{2n+2+j})\biggr),\\
&\bigcup\{\rho(X_2,Y) \mid X_2 \in[Y)_{T_n(k)}\setminus\mathcal K_2\}=\\
&\rho(E,D)\cup\rho(D,D)\cup\rho(C,D)\cup \rho(B_m,D)=\\
&\nabla(\eta_n,\eta_{n+1})\cup \nabla(\eta_{2n+1},\eta_{2n+2})\cup\Delta(E(D))\cup\biggl(\bigcup_{j=0}^n\nabla(\eta_{j},\eta_{2n+2+j})\biggr)\cup \\
&\nabla(\eta_{m-1},\eta_{n+1+m},\eta_{2n+1+m})\cup\nabla(\eta_m,\eta_{n+m},\eta_{2n+2+m}).
\end{aligned}
\]
Then the transitive closure of the relation $\bigcup\{\pi(X_1,Y) \mid X_1 \in\mathcal K_2\}$ is
\[
\begin{aligned}
&\nabla(\eta_0,\eta_1,\dots,\eta_{m-1})\cup\nabla(\eta_{m},\eta_{m+1},\dots,\eta_{k-1})\cup\nabla(\eta_k,\eta_{k+1},\dots,\eta_n)\cup\\
&\nabla(\eta_{n+1},\eta_{n+2},\dots,\eta_{n+m})\cup\nabla(\eta_{n+m+1},\eta_{n+m+2},\dots,\eta_{n+k})\cup\\
&\nabla(\eta_{n+k+1},\eta_{n+k+2},\dots,\eta_{2n+1})\cup\nabla(\eta_{2n+2},\eta_{2n+3},\dots,\eta_{2n+m+1})\cup\\
&\nabla(\eta_{2n+m+2},\eta_{2n+m+3},\dots,\eta_{2n+k+1})\cup\nabla(\eta_{2n+k+2},\eta_{2n+k+3},\dots,\eta_{3n+2}),
\end{aligned}
\]
while the transitive closure of the relation $\bigcup\{\rho(X_2,Y) \mid X_2 \in[Y)_{T_n(k)}\setminus\mathcal K_2\}$ is
\[
\begin{aligned}
&\nabla(\eta_n,\eta_{n+1},\eta_{3n+2})\cup \nabla(\eta_0,\eta_{2n+1},\eta_{2n+2})\cup\\
&\Delta(E(D))\cup\biggl(\bigcup_{j=1, j\ne m-1,m}^{n-1}\nabla(\eta_{j},\eta_{2n+2+j})\biggr)\cup\\
&\nabla(\eta_{m-1},\eta_{n+1+m},\eta_{2n+1+m})\cup\nabla(\eta_m,\eta_{n+m},\eta_{2n+2+m}).
\end{aligned}
\]
It follows that the relation $\tau(\mathcal K_2 ,Y)$ coincides with
\[
\begin{aligned}
&\nabla(\eta_0,\eta_1,\dots,\eta_{m-1},\eta_{n+m+1},\eta_{n+m+2},\dots,\eta_{2n+m+1})\cup\\
&\nabla(\eta_m,\eta_{m+1},\dots, \eta_{n+m},\eta_{2n+m+2},\dots,\eta_{3n+2}).
\end{aligned}
\]
Since $t\in\{m,\dots,k-1,n+1,\dots,n+m,2n+m+2,\dots,2n+k+1\}$, we see that $\mathcal K_2$ separates  $\mu=\eta_s$ from $\nu=\eta_t$.

\smallskip

\textbf{Case 1.1.3:} $t\in\{0,1,\dots, m-2, 2n+2, 2n+3,\dots, 2n+m\}$. This is only possible when $m>1$. In this case, the filter
\[
\mathcal K_3:=\{B_i\mid 1\le i\le n,\, i\ne m-1,m,k\}
\]
does the job. Indeed, $X \notin \mathcal K_3$ and
\[
\begin{aligned}
&\bigcup\{\pi(X_1,Y) \mid X_1 \in\mathcal K_3\}=\bigcup_{j=1,\, j\ne m,m+1,k}^n\pi(B_j,D)=\\
&\bigcup_{j=1,\, j\ne m-1,m,k}^n\biggl(\nabla(\eta_{j-1},\eta_{j})\cup\nabla(\eta_{n+j},\eta_{n+1+j})\cup\nabla(\eta_{2n+1+j},\eta_{2n+2+j})\biggr),\\
&\bigcup\{\rho(X_2,Y) \mid X_2 \in[Y)_{T_n(k)}\setminus\mathcal K_3\}=\\
&\rho(E,D)\cup\rho(D,D)\cup\rho(C,D)\cup \rho(B_{m-1},D)\cup \rho(B_m,D)=\\
&\nabla(\eta_n,\eta_{n+1})\cup \nabla(\eta_{2n+1},\eta_{2n+2})\cup\Delta(E(D))\cup\biggl(\bigcup_{j=0}^n\nabla(\eta_{j},\eta_{2n+2+j})\biggr)\cup \\
&\nabla(\eta_{m-2},\eta_{n+m},\eta_{2n+m})\cup\nabla(\eta_{m-1},\eta_{n-1+m},\eta_{2n+1+m})\cup\\
&\nabla(\eta_{m-1},\eta_{n+1+m},\eta_{2n+1+m})\cup\nabla(\eta_m,\eta_{n+m},\eta_{2n+2+m}).
\end{aligned}
\]
Then the transitive closure of the relation $\bigcup\{\pi(X_1,Y) \mid X_1 \in\mathcal K_3\}$ is
\[
\begin{aligned}
&\nabla(\eta_0,\eta_1,\dots,\eta_{m-2})\cup\nabla(\eta_{m},\eta_{m+1},\dots,\eta_{k-1})\cup\nabla(\eta_k,\eta_{k+1},\dots,\eta_n)\cup\\
&\nabla(\eta_{n+1},\eta_{n+2},\dots,\eta_{n+m-1})\cup\nabla(\eta_{n+m+1},\eta_{n+m+2},\dots,\eta_{n+k})\cup\\
&\nabla(\eta_{n+k+1},\eta_{n+k+2},\dots,\eta_{2n+1})\cup\nabla(\eta_{2n+2},\eta_{2n+3},\dots,\eta_{2n+m})\cup\\
&\nabla(\eta_{2n+m+2},\eta_{2n+m+3},\dots,\eta_{2n+k+1})\cup\nabla(\eta_{2n+k+2},\eta_{2n+k+3},\dots,\eta_{3n+2}),
\end{aligned}
\]
while the transitive closure of the relation $\bigcup\{\rho(X_2,Y) \mid X_2 \in[Y)_{T_n(k)}\setminus\mathcal K_3\}$ is
\[
\begin{aligned}
&\nabla(\eta_n,\eta_{n+1},\eta_{3n+2})\cup \nabla(\eta_0,\eta_{2n+1},\eta_{2n+2})\cup\\
&\Delta(E(D))\cup\biggl(\bigcup_{j=1, j\ne m-2,m-1}^{n-1}\nabla(\eta_{j},\eta_{2n+2+j})\biggr)\cup\\
&\nabla(\eta_{m-2},\eta_{m},\eta_{n+m},\eta_{2n+m},\eta_{2n+m+2})\cup\\
&\nabla(\eta_{m-1},\eta_{n+m-1},\eta_{n+m+1},\eta_{2n+m+1}).
\end{aligned}
\]
It follows that the relation $\tau(\mathcal K_3 ,Y)$ coincides with
\[
\begin{aligned}
\nabla(&\eta_0,\eta_1,\dots,\eta_{m-2},\eta_m,\eta_{m+1},\dots,\eta_{k-1},\eta_{n+m},\\
&\eta_{n+k+1},\eta_{n+k+2},\dots,\eta_{2n+m},\eta_{2n+m+2},\eta_{2n+m+3},\dots,\eta_{2n+k+1})\cup\\
\nabla(&\eta_{m-1},\eta_k,\eta_{k+1},\dots,\eta_{n+m-1},\eta_{n+m+1},\eta_{n+m+2},\dots,\eta_{n+k},\\
&\eta_{2n+m+1},\eta_{2n+k+2},\eta_{2n+k+3},\dots,\eta_{3n+2}).
\end{aligned}
\]
Since $t\in\{0,1,\dots, m-2, 2n+2, 2n+3,\dots, 2n+m\}$, we see that $\mathcal K_3$ separates $\mu=\eta_s$ from $\nu=\eta_t$.

\smallskip

\textbf{Case 1.1.4:} $t\in\{k,\dots, n, n+k+1,\dots, 2n+1,2n+k+2,\dots, 3n+2\}$. In this case, the filter $
\mathcal K_4:=\{E\}$ does the job. Indeed, $X \notin \mathcal K_4$ and
\[
\begin{aligned}
&\bigcup\{\pi(X_1,Y) \mid X_1 \in\mathcal K_4\}=\pi(E,D)=\nabla(\eta_n,\eta_{2n+1})\cup \nabla(\eta_{n+1},\eta_{2n+2}),\\
&\bigcup\{\rho(X_2,Y) \mid X_2 \in[Y)_{T_n(k)}\setminus\mathcal K_4\}=\rho(D,D)\cup\rho(C,D)\cup\biggl(\bigcup_{i=1, i\ne k}^n\rho(B_i,D)\biggr)=\Delta(E(D))\cup\\
&\biggl(\bigcup_{j=0}^n\nabla(\eta_{j},\eta_{2n+2+j})\biggr)\cup \biggl(\bigcup_{i=1, i\ne k}^n\nabla(\eta_{i-1},\eta_{n+1+i},\eta_{2n+1+i})\cup\nabla(\eta_i,\eta_{n+i},\eta_{2n+2+i})\biggr).
\end{aligned}
\]
It follows that the relation $\tau(\mathcal K_4 ,Y)$ coincides with
\[
\begin{aligned}
\nabla(&\eta_0,\dots,\eta_{k-1},\eta_{n+1},\dots,\eta_{n+k},\eta_{2n+2},\dots,\eta_{2n+k+1})\cup\\
\nabla(&\eta_k,\dots,\eta_n,\eta_{n+k+1},\dots,\eta_{2n+1},\eta_{2n+k+2},\dots,\eta_{3n+2}).
\end{aligned}
\]
Since $t\in\{k,\dots, n, n+k+1,\dots, 2n+1,2n+k+2,\dots, 3n+2\}$, we see that $\mathcal K_4$ separates $\mu=\eta_s$ from $\nu=\eta_t$.

\smallskip

\textbf{Case 1.2:} $\gamma=\chi_m^{-1}\chi_m$ and so $s\in\{m,n-1+m,2n+m\}$ but $t\notin\{m,n-1+m,2n+m\}$. We have:
\begin{itemize}
\item if $t\in\{m+1,m+2,\dots,k-1,2n+m+3,2n+m+4,\dots,2n+k+1\}$ (provided that $m+1<k$), then it follows from the calculations in Case 1.1.1 that the filter $\mathcal K_1$ separates $\mu=\eta_s$ from $\nu=\eta_t$;
\item if $t\in\{0,1,\dots,m-1,n+m+1,n+m+1,\dots,n+k,2n+2,\dots,2+m+1\}$, then it follows from the calculations in Case 1.1.2 that the filter $\mathcal K_2$ separates $\mu=\eta_s$ from $\nu=\eta_t$;
\item if $t\in\{n+1,k+2,\dots,n+m-1\}$ (provided that $m>1$), then it follows from the calculations in Case 1.1.3 that the filter $\mathcal K_3$ separates $\mu=\eta_s$ from $\nu=\eta_t$;
\item if $t\in\{k,\dots, n, n+k+1,\dots, 2n+1,2n+k+2,\dots, 3n+2\}$, then it follows from the calculations in Case 1.1.4 that the filter $\mathcal K_4$ separates $\mu=\eta_s$ from $\nu=\eta_t$.
\end{itemize}
This completes the consideration of Case 1.2.

\smallskip

\textbf{Case 2:} $X=C$. In this case, $|s-t|\ne 2n$ because if $|s-t|=2n$, then no idempotent $\gamma\in X$ can satisfy $\mu \le \gamma$ and $\nu\nleq \gamma$. Clearly, either $s\in\{m-1,n+m+1,2n+m+1\}$ or $s\in\{m,n+m,2n+m+2\}$ for some $m=1,2,\dots,n$. We may assume without any loss that $s\in\{m-1,n+m+1,2n+m+1\}$.

Assume that $t\in\{m-1,n+m+1,2n+m+1\}$. Since $|s-t|\ne 2n$, this is only possible when $s$ belongs to one of the sets $\{m-1,2n+m+1\}$ or $\{n+m+1\}$, while $t$ belongs to the other one. Arguments similar to ones from Case 1 can show that
\begin{itemize}
\item if $m=k$, then the filter $\mathcal K_4$ separates $\mu=\eta_s$ from $\nu=\eta_t$;
\item if $m\ne k$, then the filter
\[
\mathcal K_5:=\{B_i\mid 1\le i\le n,\, i\ne k\}
\]
separates $\mu=\eta_s$ from $\nu=\eta_t$.
\end{itemize}

Let now $t\notin\{m-1,n+m+1,2n+m+1\}$. If $m\ne k$, then we can use the arguments in Case 1.
Suppose now that $m=k$. Arguments similar to ones from Case 1 can show that
\begin{itemize}
\item if $t\in\{k,k+1,\dots,n,n+1,n+2,\dots,n+k,2n+k+2,2n+k+3,\dots,3n+2\}$, then the filter $\mathcal K_5$ separates $\mu=\eta_s$ from $\nu=\eta_t$;
\item if $t\in\{n+k+2,n+k+3,\dots,2n+1\}$ (provided that $k<n$), then the filter
\[
\mathcal K_6:=\{B_i\mid 1\le i\le n,\, i\ne k,k+1\}
\]
separates $\mu=\eta_s$ from $\nu=\eta_t$;
\item if $t\in\{0,1,\dots,k-2,2n+2,2n+3,\dots,2n+k\}$ (provided that $k>1$), then the filter
\[
\mathcal K_7:=\{B_i\mid 1\le i\le n,\, i\ne k-1,k\}
\]
separates $\mu=\eta_s$ from $\nu=\eta_t$.
\end{itemize}

\smallskip

\textbf{Case 3:} $X=D$. In this case, $Y=X=D$ because $Y\ne\{0\}$. If $|s-t|\ne 2n$, then we can use the arguments as in Cases 1 and 2. Assume that $|s-t|=2n$. In this case, the filter $\mathcal K_8:=[C)_{T_n(k)}=\{B_i,C\mid 1\le i\le n,\, i\ne k\}$ does the job. Indeed, $X \notin \mathcal K_8$ and
\[
\begin{aligned}
&\bigcup\left\{\pi(X_1,D) \mid X_1 \in [C)_{T_n(k)}\right\}=\\
&\biggl(\bigcup_{i=1,\ i\ne k}^n\nabla(\eta_{i-1},\eta_i)\cup \nabla(\eta_{n+i},\eta_{n+1+i})\cup \nabla(\eta_{2n+1+i},\eta_{2n+2+i})\biggr)\cup\\
&\nabla(\eta_0,\eta_1,\dots,\eta_n)\cup \nabla(\eta_{2n+2},\eta_{2n+3},\dots,\eta_{3n+2}),\\
&\bigcup\{\rho(X_2,Y) \mid X_2 \in[Y)_{T_n(k)}\setminus\mathcal K_8\}=\\
&\rho(D,D)\cup\rho(E,D)=\Delta(E(D))\cup\nabla(\eta_n,\eta_{n+1})\cup \nabla(\eta_{2n+1},\eta_{2n+2}).
\end{aligned}
\]
Hence $\tau(\mathcal K_8 ,Y)=\nabla(\eta_0,\eta_1,\dots,\eta_{n+k})\cup \nabla(\eta_{n+k+1},\eta_{n+k+2},\dots,\eta_{3n+2})$. Since $|s-t|=2n$, we see that $\mathcal K_8$ separates  $\mu=\eta_s$ from $\nu=\eta_t$.

\smallskip

\textbf{Case 4:} $X=E$. Then $\gamma\in\{\chi\chi^{-1},\chi^{-1}\chi\}$. By symmetry, we may assume without any loss that $\gamma=\chi\chi^{-1}$. Hence $s\in \{n,n+1\}$ and $t\notin \{n,n+1\}$.
Arguments similar to the above can show that
\begin{itemize}
\item if $t\in\{0,1,\dots,k-1,n+k+1,n+k+2,\dots,2n+k+1\}$, then the filter $\mathcal K_5$ separates $\mu=\eta_s$ from $\nu=\eta_t$;
\item if $t=3n+2$, then the filter $\mathcal K_8$ separates $\mu=\eta_s$ from $\nu=\eta_t$;
\item if $t\in\{n+2,n+3,\dots,n+k\}$ (provided that $k>1$), then the filter
\[
\mathcal K_9:=\{B_i\mid 1\le i\le n,\, i\ne 1,k\}
\]
separates $\mu=\eta_s$ from $\nu=\eta_t$;
\item if $t\in\{k,k+1,\dots,n-1,2n+k+2,2n+k+3,\dots,3n+1\}$ (provided that $k<n$), then the filter
\[
\mathcal K_{10}:=\{B_i\mid 1\le i\le n,\, i\ne k,n\}
\]
separates $\mu=\eta_s$ from $\nu=\eta_t$;
\end{itemize}
This completes the consideration of Case 4.
\end{proof}

Since the inverse semigroup $(B_2^1,\cdot,{}^{-1})$ satisfies the identity $x^2= x^3$, applying Proposition~\ref{prop:Sn1} to monogenic inverse subsemigroups of $(S_n,\cdot,{}^{-1})$ yields the following fact:

\begin{corollary}
\label{cor:x2x3}
The identity $x^2= x^3$ holds in $(S_n,\cdot,{}^{-1})$ for all $n\ge 2$.
\end{corollary}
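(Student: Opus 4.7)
The plan is to read this corollary as an immediate specialization of Proposition~\ref{prop:Sn1} to the case of one-generator inverse subsemigroups. Concretely, fix an arbitrary element $s\in S_n$ and consider the monogenic inverse subsemigroup $\langle s\rangle$ of $(S_n,\cdot,{}^{-1})$. This subsemigroup is generated by a single element, and since $1<n$ whenever $n\ge 2$, Proposition~\ref{prop:Sn1} applies and tells us that $\langle s\rangle$ lies in the inverse semigroup variety generated by the $6$-element Brandt monoid $(B_2^1,\cdot,{}^{-1})$.

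Next, I invoke the fact recorded in Subsection~\ref{subsec:b21} that $(B_2^1,\cdot,{}^{-1})$ satisfies the identity $x^2=x^3$ (because $c^2=d^2=0$ while all remaining elements of $B_2^1$ are idempotent). Since satisfaction of an identity is inherited by subalgebras, homomorphic images, and direct products, the identity $x^2=x^3$ holds throughout the inverse semigroup variety generated by $(B_2^1,\cdot,{}^{-1})$; hence it holds in $\langle s\rangle$. In particular, $s^2=s^3$. Because $s\in S_n$ was arbitrary, the identity $x^2=x^3$ holds in $(S_n,\cdot,{}^{-1})$, as required.

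There is essentially no obstacle here beyond invoking the preceding proposition; the technical content has already been absorbed in the proof of Proposition~\ref{prop:Sn1}. The only thing to be a touch careful about is the phrase ``less than $n$'' in the statement of that proposition: for $n\ge 2$, monogenic subsemigroups are indeed generated by fewer than $n$ elements, so the hypothesis is satisfied for every $n\ge 2$, which matches the range of $n$ asserted in the corollary.
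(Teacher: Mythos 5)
Your proof is correct and coincides with the paper's own argument: the authors likewise derive the corollary by applying Proposition~\ref{prop:Sn1} to monogenic inverse subsemigroups of $(S_n,\cdot,{}^{-1})$ and using that $(B_2^1,\cdot,{}^{-1})$ satisfies $x^2=x^3$. Your remark about the hypothesis ``less than $n$'' being met since $1<n$ for $n\ge 2$ is exactly the point that makes the specialization legitimate.
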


Now we state and prove the second property of the semigroups $S_n$ essential for our arguments in Section~\ref{sec:main}. For any $n\ge2$, let
\begin{align*}
v_n&:= \Bigl(x_1x_2\cdots x_{2n}\cdot x_nx_{n-1}\cdots x_1\cdot x_{n+1}x_{n+2}\cdots x_{2n}\Bigr)\cdot x_1x_2\cdots x_{n},\\
v_n^\prime&:= \Bigl(x_1x_2\cdots x_{2n}\cdot x_nx_{n-1}\cdots x_1\cdot x_{n+1}x_{n+2}\cdots x_{2n}\Bigr)^2\cdot x_1x_2\cdots x_{n}
\end{align*}

\begin{proposition}
\label{prop:Sn2}
Let $n\ge 2$. The semigroup $(S_n,\cdot)$ violates the identity $v_n=v_n^\prime$.
\end{proposition}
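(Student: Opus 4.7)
The plan is to exhibit an explicit substitution $x_i \mapsto s_i$ in $S_n$ under which $v_n$ and $v_n^\prime$ evaluate to distinct elements. Writing $A := x_1 x_2 \cdots x_n$, $B := x_{n+1} x_{n+2} \cdots x_{2n}$, and $\bar{A} := x_n x_{n-1} \cdots x_1$, observe that $v_n = A B \bar{A} B A$ while $v_n^\prime = (A B \bar{A} B)^2 \cdot A$, so $v_n^\prime$ differs from $v_n$ only by the insertion of an extra factor $A B \bar{A} B$. The ``full sweep'' of the generators visible in Fig.~\ref{fig:semigroups-sn} (the path $0 \to 1 \to \cdots \to n \to 2n+1 \to 2n \to \cdots \to n+1 \to 2n+2 \to \cdots \to 3n+2$) suggests picking the substitution so that $A$ realises $\chi_1 \chi_2 \cdots \chi_n$ and $B$ acts in $S_n$ as the single generator $\chi$.

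Concretely, I would set $x_i := \chi_i$ for $i = 1, \ldots, n$, $x_{n+1} := \chi$, and $x_{n+2} = x_{n+3} = \cdots = x_{2n} := \chi^{-1}\chi$. From \eqref{eq:formulas2}, the idempotent $\chi^{-1}\chi$ is the identity on the image $\{2n+1, 2n+2\}$ of $\chi$, so in $S_n$ one has $B = \chi \cdot (\chi^{-1}\chi)^{n-1} = \chi$. A straightforward composition via \eqref{eq:formulas} yields $A = \begin{pmatrix} 0 & 2n+2 \\ n & 3n+2 \end{pmatrix}$ and $\bar{A} = \begin{pmatrix} 2n+1 \\ n+1 \end{pmatrix}$.

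Tracing $v_n$ from the starting point $0$ then gives $0 \to n \to 2n+1 \to n+1 \to 2n+2 \to 3n+2$, so $v_n$ sends $0$ to $3n+2$. For $v_n^\prime$, the first factor $A B \bar{A} B$ sends $0$ to $2n+2$ by the same trace; the second copy begins with $A$ sending $2n+2$ to $3n+2$, after which $B = \chi$ must be applied at $3n+2$. Since $\mathrm{dom}(\chi) = \{n, n+1\}$ and $3n+2 > n+1$ (as $n \ge 2$), this step is undefined, annihilating $v_n^\prime$ at $0$. Hence $v_n \ne v_n^\prime$ in $(S_n,\cdot)$.

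The only nontrivial design choice is the ``padding'' $\chi^{-1}\chi$ for the surplus variables $x_{n+2}, \ldots, x_{2n}$: it is precisely the range idempotent of $\chi$, which allows $B$ to collapse to $\chi$ while leaving enough ``length'' in the word that the doubled factor in $v_n^\prime$ drives the image past $\chi$'s domain on the second pass. Everything else is bookkeeping on partial transformations.
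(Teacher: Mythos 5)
Your proposal is correct and uses exactly the substitution the paper uses ($x_i\mapsto\chi_i$ for $i\le n$, $x_{n+1}\mapsto\chi$, the remaining variables mapped to the range idempotent $\chi^{-1}\chi$), arriving at the same conclusion that $\varphi_n(v_n)$ sends $0$ to $3n+2$ while $\varphi_n(v_n^\prime)$ is nowhere defined. The paper leaves the trace through Fig.~\ref{fig:semigroups-sn} to the reader, whereas you write it out explicitly; the argument is the same.
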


\begin{proof}
Define a substitution $\varphi_n\colon\{x_1,x_2,\dots,x_{2n}\}\to S_n$ as follows:
\[
\varphi_n(x_i):=
\begin{cases}
\chi_i&\text{if }1\le i\le n,\\
\chi&\text{if }i=n+1,\\
\chi^{-1}\chi&\text{if }n+2\le i\le 2n.
\end{cases}
\]
Using Fig.~\ref{fig:semigroups-sn}, it is easy to check that the transformation $\varphi_n(v_n)$ maps $0$ to $3n+2$, while the transformation $\varphi_n(v_n^\prime)$ is nowhere defined. Therefore, $\varphi_n(v_n)\ne\varphi_n(v_n^\prime)$ whence the identity $v_n=v_n^\prime$ fails in $(S_n,\cdot)$.
\end{proof}

\section{Main results}
\label{sec:main}

We start with a general result ensuring the absence of a finite \ib\ for \ais{}s that generate a variety containing the \ais\ $\mB_2^1$.

\begin{theorem}
\label{thm:main}
Let $\mathcal{S}=(S,+,\cdot)$ be an \ais\ whose multiplicative reduct satisfies the identities $v_n= v_n^\prime$ for all $n\ge2$. If the \ais\ $\mB_2^1$ belongs to the variety $\var\mathcal{S}$, then $\mathcal{S}$ admits no finite identity basis.
\end{theorem}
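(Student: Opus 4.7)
My plan is to argue by contradiction, running the standard ``Birkhoff-threshold'' argument using the inverse semigroups $(S_n,\cdot,{}^{-1})$ from Section~\ref{sec:semigroups-sn}. Assume $\mathcal{S}$ is finitely based, fix a finite identity basis $\Sigma\subseteq\Id\mathcal{S}$, and pick $n\ge 2$ large enough that every identity in $\Sigma$ uses fewer than $n$ distinct variables. By Corollary~\ref{cor:x2x3} the inverse semigroup $(S_n,\cdot,{}^{-1})$ satisfies $x^2=x^3$, so Lemma~\ref{lem:as-ais} converts it into the \ais\ $\mathcal{S}_n:=(S_n,+_{\mathrm{nat}},\cdot)$ with $s+_{\mathrm{nat}}t=(st^{-1})^2s$. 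The goal is to show $\mathcal{S}_n\in\var\mathcal{S}$; since the multiplicative reduct of any member of $\var\mathcal{S}$ inherits $v_n=v_n^\prime$, Proposition~\ref{prop:Sn2} would then give the desired contradiction.

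The crux is the intermediate claim that every subsemiring of $\mathcal{S}_n$ generated by at most $n-1$ elements lies in $\var\mathcal{S}$. To establish it, given such a generating set $Y$, I would pass to the inverse subsemigroup $\mathcal{T}$ of $(S_n,\cdot,{}^{-1})$ generated by $Y$. Since $+_{\mathrm{nat}}$ is expressible as the single semigroup term $(st^{-1})^2s$, the set $\mathcal{T}$ is automatically closed under $+_{\mathrm{nat}}$ and contains the subsemiring of $\mathcal{S}_n$ generated by $Y$. Proposition~\ref{prop:Sn1} puts $\mathcal{T}$ into the inverse semigroup variety generated by $(B_2^1,\cdot,{}^{-1})$. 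Because the same term formula defines $+_{\mathrm{nat}}$ throughout that variety---all its members satisfy $x^2=x^3$---this passage to the \ais\ structure commutes with inverse semigroup homomorphisms, inverse subsemigroups, and direct products. Hence $(\mathcal{T},+_{\mathrm{nat}},\cdot)$ lies in $\var\mB_2^1$, and by the hypothesis $\mB_2^1\in\var\mathcal{S}$, so does the subsemiring of $\mathcal{S}_n$ generated by $Y$.

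With the intermediate claim in place, the conclusion is a short compactness-style check: for any identity $u=v$ in $\Sigma$ and any homomorphism $\varphi\colon\mathcal{T}(X)\to\mathcal{S}_n$, the images $\varphi(u)$ and $\varphi(v)$ lie in the subsemiring of $\mathcal{S}_n$ generated by the $\varphi$-images of the variables occurring in $u=v$, a set of size at most $n-1$. By the claim this subsemiring lies in $\var\mathcal{S}$ and therefore satisfies $u=v$, so $\varphi(u)=\varphi(v)$. Consequently $\mathcal{S}_n$ satisfies every identity in $\Sigma$; since $\Sigma$ is an \ib\ of $\mathcal{S}$, this forces $\mathcal{S}_n\in\var\mathcal{S}$, and the contradiction with Proposition~\ref{prop:Sn2} follows.

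The step I expect to be the main obstacle is the intermediate claim, specifically the transfer from Ka\softd{}ourek's statement about inverse semigroup varieties (Proposition~\ref{prop:Sn1}) to a statement about \ais{} varieties. Everything hinges on the fact that, in the presence of $x^2=x^3$, the binary operation $+_{\mathrm{nat}}$ is given by a \emph{single} semigroup term in $\cdot$ and ${}^{-1}$; this is what makes the transfer through homomorphisms, subsemigroups, and direct powers essentially automatic rather than requiring separate verifications for each class operator. All the heavy combinatorial work has already been done inside Proposition~\ref{prop:Sn1}, and the remainder of the argument is the bookkeeping described above.
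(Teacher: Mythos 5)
Your proposal is correct and follows essentially the same route as the paper's proof: a contradiction via a finite basis in fewer than $n$ variables, conversion of $(S_n,\cdot,{}^{-1})$ into an \ais\ via Corollary~\ref{cor:x2x3} and Lemma~\ref{lem:as-ais}, the transfer through Proposition~\ref{prop:Sn1} resting on the term-definability of $+_{\mathrm{nat}}$, and the final contradiction with Proposition~\ref{prop:Sn2}. The only cosmetic difference is that the paper performs the transfer syntactically---rewriting each semiring identity $u=v$ into a $(\cdot,{}^{-1})$-identity $u'=v'$ and evaluating it in the generated inverse subsemigroup---whereas you argue semantically that the passage to $(\mathcal{T},+_{\mathrm{nat}},\cdot)$ commutes with the operators H, S, and P; both versions hinge on the same fact that $+_{\mathrm{nat}}$ is given by the single term $(xy^{-1})^2x$ throughout the varieties involved.
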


\begin{proof}
Arguing by contradiction, assume that for some $k$, the \ais\ $\mathcal{S}$ has an identity basis $\Sigma$ such that each identity in $\Sigma$ involves less than $k$ variables. Consider the inverse semigroup $(S_k,\cdot,{}^{-1})$. By Corollary~\ref{cor:x2x3} $(S_k,\cdot,{}^{-1})$ satisfies $x^2= x^3$, and therefore, Lemma~\ref{lem:as-ais} implies that $(S_k,+_{\mathrm{nat}},\cdot)$ is an ai-semi\-ring. We claim that this \ais\ satisfies an arbitrary identity $u=v$ in $\Sigma$.

By Lemma~\ref{lem:as-ais}, $x+_{\mathrm{nat}} y$ expresses as $(xy^{-1})^2x$ in $(S_k,+_{\mathrm{nat}},\cdot)$. Therefore, one can rewrite the identity $u= v$ into an identity $u'= v'$ in which $u'$ and $v'$ are $(\cdot,{}^{-1})$-terms with the same variables as $u$ and $v$. Let $x_1,x_2,\dots,x_\ell$ be all variables that occur in $u'$ or $v'$. Consider an arbitrary substitution $\tau\colon\{x_1,x_2,\dots,x_\ell\}\to S_k$ and let $(T,\cdot,{}^{-1})$ be the inverse subsemigroup of $(S_k,\cdot,{}^{-1})$ generated by the elements $\tau(x_1),\tau(x_2),\dots,\tau(x_\ell)$. Since $\ell<k$, Proposition~\ref{prop:Sn1} implies that $(T,\cdot,{}^{-1})$ belongs to the inverse semigroup variety generated by the 6-element Brandt monoid $(B_2^1,\cdot,{}^{-1})$.

Since by the condition of the theorem, the \ais\ $\mB_2^1$ lies in $\var\mathcal{S}$, the identity $u=v$ holds in $\mB_2^1$. This implies that the rewritten identity $u'= v'$ holds in $(B_2^1,\cdot,{}^{-1})$. (Here we utilize the fact that $(B_2^1,\cdot,{}^{-1})$ satisfies $x^2= x^3$, and therefore, $x+_{\mathrm{nat}} y$ expresses in $(B_2^1,\cdot,{}^{-1})$ as the same $(\cdot,{}^{-1})$-term $(xy^{-1})^2x$.) Hence the identity $u'= v'$ holds also in the inverse semigroup $(T,\cdot,{}^{-1})$, and so $u'$ and $v'$ take the same value under every substitution of elements of $T$ for the variables $x_1,x_2,\dots,x_\ell$. In particular, $\tau( u)=\tau( u')=\tau(v')=\tau(v)$. Since the substitution $\tau$ is arbitrary, this proves our claim that the identity $u= v$ holds in the \ais\ $(S_k,+_{\mathrm{nat}},\cdot)$. Since $u= v$ is an arbitrary identity from the identity basis $\Sigma$ of $\mathcal{S}$, we see that $(S_k,+_{\mathrm{nat}},\cdot)$ satisfies all identities of $\mathcal{S}$. Forgetting the addition, we conclude that the multiplicative reduct $(S_k,\cdot)$  of $(S_k,+_{\mathrm{nat}},\cdot)$ satisfies all identities of the multiplicative reduct $(S,\cdot)$ of $\mathcal{S}$. By the condition of the theorem, $(S,\cdot)$ satisfies the identity $v_k = v_k^\prime$, but by Proposition \ref{prop:Sn2} this identity fails in $(S_k,\cdot)$, a contradiction.
\end{proof}

\begin{theorem}
\label{thm:a21}
The \ais{}s $\mA_2^1$ and $\End(\mC_3)$ admit no finite identity basis.
\end{theorem}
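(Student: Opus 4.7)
The plan is to derive both statements from Theorem~\ref{thm:main}. For each of $\mathcal S\in\{\mA_2^1,\End(\mC_3)\}$ I need to verify two conditions: the containment $\mB_2^1\in\var\mathcal S$, and the satisfaction of the identity $v_n=v_n'$ in $(\mathcal S,\cdot)$ for every $n\ge 2$.

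The containment is easy. For $\mA_2^1$ it is Corollary~\ref{cor:b21-in-a21}. For $\End(\mC_3)$ it follows from the same corollary together with the embedding $\mA_2^1\hookrightarrow\End(\mC_3)$ that identifies $\mA_2^1$ with the subsemiring of endomorphisms fixing~$2$ (Section~\ref{subsec:a21} and Fig.~\ref{fig:A2asOPT}): that embedding yields $\var\mA_2^1\subseteq\var\End(\mC_3)$, so $\mB_2^1\in\var\End(\mC_3)$ follows.

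Because the same embedding realizes $(\mA_2^1,\cdot)$ as a submonoid of $(\End(\mC_3),\cdot)$, it suffices to verify $v_n=v_n'$ in $(\End(\mC_3),\cdot)$ for all $n\ge 2$; this is the technical core. Writing $u_n$ for the common prefix of $v_n$ and $v_n'$, namely $x_1\cdots x_{2n}\cdot x_n\cdots x_1\cdot x_{n+1}\cdots x_{2n}$, and $t_n=x_1\cdots x_n$, the required identity amounts to
\[
\varphi(u_n)\,\varphi(t_n)=\varphi(u_n)^2\,\varphi(t_n)
\]
for every substitution $\varphi\colon\{x_1,\dots,x_{2n}\}\to\End(\mC_3)$. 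The monoid $(\End(\mC_3),\cdot)$ has just ten elements---the non-decreasing maps $\{0,1,2\}\to\{0,1,2\}$---and all of them are idempotent except the two maps $(0,0,1)$ and $(1,2,2)$, where a triple $(a,b,c)$ abbreviates $0\mapsto a,\,1\mapsto b,\,2\mapsto c$. When $\varphi(u_n)$ is idempotent, the required equality is immediate.

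The main obstacle will be the two non-idempotent cases. For instance, if $\varphi(u_n)=(0,0,1)$ then $\varphi(u_n)^2=(0,0,0)$, so the equality reduces to the conditional $\varphi(t_n)(0)=\varphi(t_n)(1)$; the case $\varphi(u_n)=(1,2,2)$ is dual. My plan for this is a combinatorial analysis of $u_n$ exploiting the fact that each variable appears exactly twice in it, once in the initial factor $x_1\cdots x_{2n}$ and once in one of the later factors $x_n\cdots x_1$ or $x_{n+1}\cdots x_{2n}$. Together with the monoid identity $x^2=x^3$ satisfied by $(\End(\mC_3),\cdot)$, this twice-occurrence pattern should force $\varphi(t_n)$ to collapse the pair of arguments on which $\varphi(u_n)$ fails to be idempotent, by tracking how the image of $\varphi(u_n)$ can acquire a non-fixed point only when the earlier occurrences of the relevant variables already force the required collapse on the rest of $\varphi(t_n)$. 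Once both non-idempotent cases are settled, Theorem~\ref{thm:main} applies to both ai-semirings and gives the desired non-finite-basability.
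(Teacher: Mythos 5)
Your containment argument and the reduction of $v_n=v_n'$ to the single equation $\varphi(u_n)\varphi(t_n)=\varphi(u_n)^2\varphi(t_n)$ in the ten-element monoid $(\End(\mC_3),\cdot)$ are both correct, and this computational route is genuinely different from the paper's, which instead characterizes the identities of $A_2^1$ via Proposition~\ref{prop:jump} (first/last occurrence words and jumps), transfers them to the Rees quotient $\End(\mC_3)/I_1$ by a division argument, and treats the ideal $I_1$ of constants separately using the fact that its elements are right zeros. However, as written your proof has a genuine gap precisely where all the work lies: the claim that $\varphi(u_n)=(0,0,1)$ forces $0\varphi(t_n)=1\varphi(t_n)$ (and dually for $(1,2,2)$) is only announced (``should force'', ``my plan for this is''), not proved. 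That claim is the entire content of the verification, so without it you have not established that Theorem~\ref{thm:main} applies.

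The claim is true and your approach is salvageable by a short rank argument. Write $A:=\varphi(t_n)$ and factor $\varphi(u_n)=AC$ where $C$ is the image of the remaining suffix of $u_n$. Since the rank of a composite is at most the rank of its first factor and $(0,0,1)$ has rank $2$, the map $A$ has rank $2$ or $3$. If $A$ has rank $3$, it is the identity, so each $\varphi(x_i)$ with $i\le n$ is the identity, and then $\varphi(u_n)=\varphi(x_{n+1}\cdots x_{2n})^2$ is idempotent because $(\End(\mC_3),\cdot)$ satisfies $x^2=x^3$ --- a contradiction; note that this is the one sub-case where the specific shape of $u_n$, and not merely the ``each variable occurs twice'' pattern you invoke, is actually needed. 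If $A$ has rank $2$ and $0A\ne 1A$, then $1A=2A$, whence $1\varphi(u_n)=(1A)C=(2A)C=2\varphi(u_n)$, contradicting $1\varphi(u_n)=0\ne 1=2\varphi(u_n)$. Hence $0A=1A$, which is exactly what the equality reduces to; the case $\varphi(u_n)=(1,2,2)$ is symmetric, yielding $1A=2A$. With this paragraph supplied, your argument is complete and arguably more elementary than the paper's, at the cost of being special to the concrete monoid $\End(\mC_3)$ rather than flowing through the known description of the equational theory of $A_2^1$.
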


\begin{proof}
We verify that the \ais{}s $\mA_2^1$ and $\End(\mC_3)$ satisfy the premise of Theorem~\ref{thm:main}.

By Corollary~\ref{cor:b21-in-a21}, the \ais\ $\mB_2^1$ lies in the variety $\var\mA_2^1$. Recall that in Section~\ref{subsec:a21}, the \ais\ $\mA_2^1$ was represented as the subsemiring of the \ais\ $\End(\mC_3)$ consisting of all order preserving transformations of the chain $0<1<2$ that fix 2. Hence, $\mB_2^1$ also lies in the variety $\var\End(\mC_3)$.

It remains to verify that each of the semigroups $A_2^1$ and $(\End(\mC_3),\cdot)$ satisfies the identities $v_n= v_n^\prime$, $n=2,3,\dotsc$. For every $m\ge 2$, the $\mathscr D$-classes of the semigroup $(\End(\mC_m),\cdot)$ are $I_1,I_2,\dots,I_m$ where $I_k$ is the set of all transformations of rank $k$ from $\End(\mC_m)$. (This follows, for instance, from \cite[Exercise 2.16 and Proposition~2.4.2]{Howie:1995}.) In particular, the semigroup $(\End(\mC_3),\cdot)$ has three $\mathscr D$-classes shown in Fig.~\ref{fig:O3-D-classes}; the light-gray cells show the subsemigroup isomorphic to $A_2^1$.

\begin{figure}[ht]
\begin{center}
\unitlength 1.35mm
\begin{picture}(89,56)(0,2.5)
\gasset{Nw=13,Nh=10,Nmr=0}
\node[Nframe=n,Nw=15](I3)(30,57){$I_3:$}
\node[fillgray=0.9](I3-1)(40,55){\footnotesize $\begin{pmatrix}0&1&2\\0&1&2\end{pmatrix}$}
\node[Nframe=n,Nw=0,Nh=0](I3-b)(40,50){}

\node[Nframe=n,Nw=0,Nh=0](I2-t)(40,40){}
\node[Nframe=n,Nw=15](I2)(17,37){$I_2:$}
\node(I2-1-1)(27,35){\footnotesize $\begin{pmatrix}0&1&2\\0&1&1\end{pmatrix}$}
\node[fillgray=0.9](I2-1-2)(40,35){\footnotesize $\begin{pmatrix}0&1&2\\0&2&2\end{pmatrix}$}
\node[fillgray=0.9](I2-1-3)(53,35){\footnotesize $\begin{pmatrix}0&1&2\\1&2&2\end{pmatrix}$}
\node(I2-2-1)(27,25){\footnotesize $\begin{pmatrix}0&1&2\\0&0&1\end{pmatrix}$}
\node[fillgray=0.9](I2-2-2)(40,25){\footnotesize $\begin{pmatrix}0&1&2\\0&0&2\end{pmatrix}$}
\node[fillgray=0.9](I2-2-3)(53,25){\footnotesize $\begin{pmatrix}0&1&2\\1&1&2\end{pmatrix}$}
\node[Nframe=n,Nw=0,Nh=0](I2-b)(40,20){}

\node[Nframe=n,Nw=0,Nh=0](I1-t)(40,10){}
\node[Nframe=n,Nw=15](I3)(17,7){$I_1:$}
\node(I3-1)(27,5){\footnotesize $\begin{pmatrix}0&1&2\\0&0&0\end{pmatrix}$}
\node(I3-2)(40,5){\footnotesize $\begin{pmatrix}0&1&2\\1&1&1\end{pmatrix}$}
\node[fillgray=0.9](I3-3)(53,5){\footnotesize $\begin{pmatrix}0&1&2\\2&2&2\end{pmatrix}$}

\drawedge[AHnb=0](I3-b,I2-t){}
\drawedge[AHnb=0](I2-b,I1-t){}
\end{picture}
\caption{The partially ordered set $\End(\mC_3)/\mathscr D$;  the light-gray cells show the subsemigroup isomorphic to $A_2^1$}
\label{fig:O3-D-classes}
\end{center}
\end{figure}
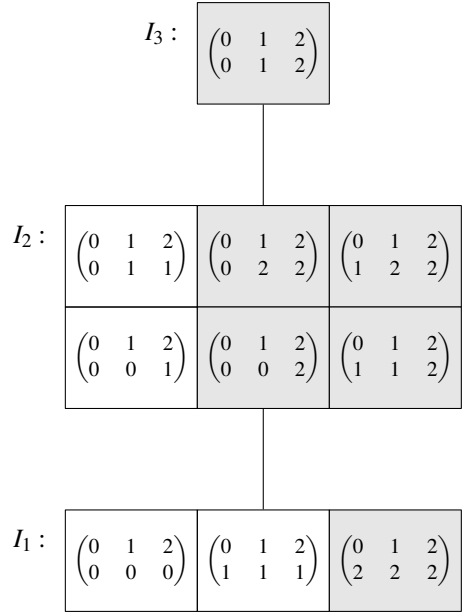

The least $\mathscr D$-class $I_1$ of $(\End(\mC_3),\cdot)$ consists of three constant transformations, which are right zeros, and constitutes an ideal in $(\End(\mC_3),\cdot)$. The Rees quotient  $\End(\mC_3)/I_1$  has a subsemigroup isomorphic to $A_2^1$; on the other hand, it is easy to see that the Rees quotient divides the direct product $A_2^1\times A_2^1$. Hence, $\End(\mC_3)/I_1$ and $A_2^1$ satisfy the same semigroup identities. A characterization of identities of the semigroup $A_2^1$ is known; see \cite{SS06}. We reproduce the characterization, using the terminology suggested in \cite{Chen20}.

As usual, we refer to semigroup terms as \emph{words}. Given a word $w$, its \emph{first} (\emph{last}) \emph{occurrence word} is obtained from $w$ by retaining only the first [respectively, the last] occurrence of each variable from $\alf(w)$. A \emph{jump} is a triple $(x,G,y)$, where $x$ and $y$ are (not necessarily distinct) variables and $G$ is a (possibly empty) set of variables containing neither $x$ nor $y$. The jump $(x,G,y)$ \emph{occurs} in a word $w$ if $w$ can be factorized as $w=u_{\mathrm{left}}xu_{\mathrm{middle}}yu_{\mathrm{right}}$ where $u_{\mathrm{left}},u_{\mathrm{middle}},u_{\mathrm{right}}$ are (possibly empty) words and $G=\alf(u_{\mathrm{middle}})$.

\begin{proposition}[{\mdseries Reformulation of \cite[Theorem 4.6]{SS06}}]
\label{prop:jump}
An identity $w=w'$ holds in the semigroup $A_2^1$ if and only if $w$ and $w'$ have the same first occurrence and the same last occurrence words, and the same jumps occur in $w$ and $w'$.
\end{proposition}

Inspecting the words $v_n$ and $v_n^\prime$, one readily sees that they have the same first occurrence word $x_1x_2\cdots x_{2n}$ and the same last occurrence word $x_{n+1}x_{n+2}\cdots x_{2n}\cdot x_1x_2\cdots x_{n}$. It remains to verify that the same jumps occur in $v_n$ and $v_n^\prime$. Fix an $n\ge 2$ and let $U:=x_1x_2\cdots x_{n}$, $V:=x_{n+1}x_{n+2}\cdots x_{2n}$, and $\overline{U}:=x_nx_{n-1}\cdots x_1$. Then $v_n=UV\overline{U}VU$ while
\begin{equation}\label{eq:vnprime}
v_n^\prime=\lefteqn{\underbrace{\phantom{UV\overline{U}VU}}_{v_n}}UV\overline{U}V\overbrace{UV\overline{U}VU}^{v_n}.
\end{equation}
Since $v_n$ is a prefix (and a suffix) of $v_n^\prime$, every jump occurring in $v_n$ occurs in $v_n^\prime$ as well. To show the converse, take an arbitrary jump $J:=(x_i,G,x_j)$ occurring in $v_n^\prime$ and let $v_n^\prime=u_{\mathrm{left}}x_iu_{\mathrm{middle}}x_ju_{\mathrm{right}}$ be the factorization corresponding to an occurrence of $J$ in $v_n^\prime$. We may assume that the word $u_{\mathrm{left}}x_i$ is a prefix of $UV\overline{U}V$ and the word $x_ju_{\mathrm{right}}$ is a suffix of $V\overline{U}VU$ since otherwise the jump $J$ occurs within either the underbraced or the overbraced parts of \eqref{eq:vnprime}, and each of these parts is nothing but $v_n$. Under this assumption, the overlap $U$ in \eqref{eq:vnprime} is a factor of the word $u_{\mathrm{middle}}$ whence $G=\alf(u_{\mathrm{middle}})\supseteq\alf(U)=\{x_1,x_2,\dots,x_{n}\}$. By the definition of a jump, $x_i,x_j\notin G$ whence $n+1\le i,j\le 2n$ and both $x_i$ and $x_j$ can occur only in $V$. This readily implies that the jump $J$ occurs within the factor $VUV$ of $v_n^\prime$, but then $J$ also occurs within the factor $V\overline{U}V$ since $\alf(U)=\alf(\overline{U})$. The word $V\overline{U}V$ is a factor of $v_n$, and thus, the jump $J$ occurs in $v_n$ also under the assumption we made. Proposition~\ref{prop:jump} now implies that the identity $v_n=v_n^\prime$ holds in $A_2^1$ (and hence, in $\End(\mC_3)/I_1$) for each $n\ge2$.

To prove that the identity $v_n=v_n^\prime$ holds in $(\End(\mC_3),\cdot)$, consider an arbitrary substitution $\tau\colon\{x_1,x_2,\dots,x_{2n}\}\to\End(\mC_3)$. If $\tau(x_i)\in I_1$ for some $i$, then $\tau(v_n)=\tau(v_n^\prime)$ since $\tau(x_i)$ is a right zero of the semigroup $(\End(\mC_3),\cdot)$ and the words $v_n$ and $v_n^\prime$ have equal suffixes following the last occurrence of $x_i$. Suppose that $\tau(x_i)\in I_2\cup I_3$ for all $i=1,2,\dots,2n$. Since the identity $v_n= v_n^\prime$ holds in the Rees quotient $\End(\mC_3)/I_1$, either $\tau(v_n)=\tau(v_n^\prime)$ or both $\tau(v_n)$ and $\tau(v_n^\prime)$ lie in $I_1$. In the latter case,
\[
\tau(v_n^\prime)=\tau(x_1x_2\cdots x_{2n}\cdot x_nx_{n-1}\cdots x_1\cdot x_{n+1}x_{n+2}\cdots x_{2n})\cdot\tau(v_n)=\tau(v_n)
\]
because every element of  the ideal $I_1$, in particular $\tau(v_n)$, is a right zero of the semigroup $(\End(\mC_3),\cdot)$. Since the substitution $\tau$ is arbitrary, the semigroup $(\End(\mC_3),\cdot)$ satisfies the identity $v_n= v_n^\prime$. Now Theorem~\ref{thm:main} applies, yielding that the \ais{}s $\mA_2^1$ and $\End(\mC_3)$ admit no finite identity basis.
\end{proof}

As a consequence, we obtain a complete solution to the FBP for endomorphism semirings of finite chains.

\begin{theorem}
\label{thm:fbpforendo}
The endomorphism semiring of a chain with $m$ elements admits a finite identity basis if and only if $m\le 2$.
\end{theorem}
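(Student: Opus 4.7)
The plan is to split the theorem into the cases $m=1$, $m=2$, $m=3$, and $m\ge 4$, observing that each case is settled by machinery already in place.

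For $m=1$, the semilattice $\mC_1$ is trivial, so $\End(\mC_1)$ is a one-element \ais\ and is finitely based by the empty identity basis. For $m=2$, the semiring $\End(\mC_2)$ has three elements: the identity map and the two constant maps on the chain $0<1$. I would write down its Cayley tables for $+$ and $\cdot$ and exhibit a short explicit finite set of identities that generates $\Id\End(\mC_2)$. Because the structure is so small, the check is entirely mechanical: one verifies that the proposed identities force the Cayley tables up to isomorphism and then invokes the standard fact that an \ais\ is finitely based as soon as every identity it satisfies follows from finitely many identities encoding its multiplication table, addition table, and size. This is the one place where the paper uses the phrase ``easily seen'', and it is the only step whose content is not already packaged in an earlier result.

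For $m=3$, Theorem~\ref{thm:a21} applies directly and states that $\End(\mC_3)$ admits no finite identity basis. For $m\ge 4$, Proposition~\ref{prop:infbmge4} asserts that $\End(\mC_m)$ is inherently \nfb. To pass from inherent nonfinite basability to the absence of a finite identity basis, I would argue as follows: if $\End(\mC_m)$ had a finite identity basis, then $\var\End(\mC_m)$ would be finitely based, and being generated by a finite structure it is automatically locally finite (see the remark preceding Proposition~\ref{prop:infb}); but then $\End(\mC_m)$ would be contained in a finitely based locally finite variety, contradicting inherent nonfinite basability.

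Combining the four cases gives the ``only if'' direction ($\End(\mC_m)$ is \nfb\ for $m\ge 3$) and the ``if'' direction ($\End(\mC_m)$ is finitely based for $m\le 2$), completing the theorem. The only step demanding any verification at all is the case $m=2$; I do not anticipate any genuine obstacle there, since the \ais\ in question has three elements and every operation is immediately describable. The substantive content of the theorem is carried entirely by Theorem~\ref{thm:a21} and Proposition~\ref{prop:infbmge4}, and this final statement is a one-paragraph assembly.
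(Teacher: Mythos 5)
Your case split ($m=1$, $m=2$, $m=3$, $m\ge 4$) is exactly the paper's, and three of the four cases are handled the same way: $m=1$ is trivial, $m=3$ is Theorem~\ref{thm:a21}, and $m\ge 4$ follows from Proposition~\ref{prop:infbmge4} via the implication (inherently nonfinitely based $\Rightarrow$ nonfinitely based) that you spell out correctly. The problem is the one case you yourself identify as carrying content, $m=2$.

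There is no ``standard fact that an \ais\ is finitely based as soon as every identity it satisfies follows from finitely many identities encoding its multiplication table, addition table, and size.'' Identities are universally quantified equations between terms and cannot encode the size of a model: any variety containing a nontrivial algebra contains algebras of every infinite cardinality, so no set of identities can ``force the Cayley tables up to isomorphism.'' More to the point, finite algebras are not automatically finitely based --- that is the entire subject of this paper --- and verifying that a candidate finite set $\Sigma\subseteq\Id\mA$ implies \emph{all} of $\Id\mA$ is not a mechanical inspection of tables; it requires a normal-form or derivation argument covering the infinitely many identities of $\mA$. So while the conclusion for $m=2$ is true, the route you describe to it does not exist. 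The paper instead disposes of this case by identification plus citation: the multiplicative reduct of $\End(\mC_2)$ is the two-element right-zero semigroup with an identity adjoined and its additive reduct is a three-element chain; the dual of this \ais\ is the semiring denoted $B$ in \cite{GPZ05}, and \cite[Theorem 2.11]{GPZ05} supplies an explicit finite identity basis for $B$. If you want a self-contained argument you must actually exhibit such a basis and prove its completeness, which is a genuine (if modest) piece of work rather than a table check.
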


\begin{proof}
By Proposition~\ref{prop:infbmge4}, the \ais{} $\End(\mC_m)$ is \nfb{} (and even inherently \nfb) for each $m\ge 4$. Theorem~\ref{thm:a21} shows that $\End(\mC_3)$ also is \nfb. The  \ais{} $\End(\mC_1)$ has one element, and thus, it is trivially \fb.  The \ais{} $\End(\mC_2)$ has three elements; its additive semilattice is a chain while its multiplicative reduct is the 2-element right zero semigroup with identity element adjoined. The dual of $\End(\mC_2)$ appears in \cite{GPZ05} where it is denoted by $B$. As \cite[Theorem 2.11]{GPZ05} provides a finite identity basis for $B$, the \ais{} $\End(\mC_2)$ also is \fb{}.
\end{proof}

\begin{remark}
\label{rem:dld}
So far, we have considered the set $\End(\mC_m)$ of all endomorphisms of the chain $0<1<\dots<m-1$ as an \ais{} under composition $\cdot$ and point-wise addition $+$. Yet another natural semilattice operation on the same set can be introduced as follows: for $\alpha,\beta\in\End(\mC_m)$, define $\alpha\diamondsuit\beta$ by $x(\alpha\diamondsuit\beta):=\min\{x\alpha,x\beta\}$ for each $x=0,1,\dots,m-1$. The structure $(\End(\mC_m),\diamondsuit,\cdot)$ is, of course, nothing but the \ais{} of all endomorphisms of the chain $0>1>\dots>m-1$, and so it is isomorphic to $(\End(\mC_m),+,\cdot)$. Thus, Theorem~\ref{thm:fbpforendo} applies to $(\End(\mC_m),\diamondsuit,\cdot)$ too.

One may consider the combined structure $(\End(\mC_m),+,\diamondsuit,\cdot)$ and ask for a solution to the FBP for $(2,2,2)$-structures of this kind. Here the answer is an immediate consequence of a classic fact of universal algebra. The reduct $(\End(\mC_m),+,\diamondsuit)$ is readily seen to be a (distributive) lattice, and therefore, each structure $(\End(\mC_m),+,\diamondsuit,\cdot)$ lies in a congruence-distributive variety. Now Baker's finite basis theorem \cite[Theorem V.4.18]{BuSa81} applies, showing that the structure $(\End(\mC_m),+,\diamondsuit,\cdot)$ is \fb{} for each $m$.
\end{remark}

\paragraph*{\textbf{Acknowledgements.}}
The authors are grateful to the referee for his/her remarks and suggestions which helped us discover and fix an error made in the initial version of the paper.

\small

\end{document}